\numberwithin{equation}{section}
\newtheorem{theorem}{Theorem}[section]
\newtheorem{lemma}[theorem]{Lemma}
\newtheorem{proposition}[theorem]{Proposition}
\newtheorem{corollary}[theorem]{Corollary}
\theoremstyle{definition}
\newtheorem{example}[theorem]{Example}
\numberwithin{equation}{section}
\newcommand{\R}{\mathbb{R}}
\newcommand{\C}{\mathbb{C}}
\newcommand{\Z}{\mathbb{Z}}
\newcommand{\s}{\mathcal{S}}
\newcommand{\res}{\mathrm{restricted}}
\begin{document}
\title[Toeplitz operators on weighted Fock spaces]{Toeplitz operators on weighted Fock spaces with $A_{\infty}$-type weights}
\date{December 8, 2025.
}
\author[J. Chen]{Jiale Chen}
\address{Jiale Chen, School of Mathematics and Statistics, Shaanxi Normal University, Xi'an 710119, China.}
\email{jialechen@snnu.edu.cn}

\thanks{This work was supported by National Natural Science Foundation of China (No. 12501170).}

\subjclass[2020]{47B35, 30H20}
\keywords{Toeplitz operator, weighted Fock space, $A_{\infty}$-type weight}


\begin{abstract}
  \noindent By establishing some reproducing kernel estimates, we characterize the bounded, compact and Schatten $p$-class Toeplitz operators with positive measure symbols on the weighted Fock space $F^2_{\alpha,w}$ for $p\geq1$, where $w$ is a weight on the complex plane satisfying an $A_{\infty}$-type condition. Applications to Volterra operators and weighted composition operators are given.
\end{abstract}
\maketitle


\section{Introduction}
\allowdisplaybreaks[4]

Let $w$ be a weight, i.e. a non-negative and locally integrable function, on the complex plane $\C$. Given $0<p,\alpha<\infty$, we define the weighted space $L^p_{\alpha,w}$ as the collection of measurable functions $f$ on $\C$ such that
$$\|f\|^p_{L^p_{\alpha,w}}:=\int_{\C}|f(z)|^pe^{-\frac{p\alpha}{2}|z|^2}w(z)dA(z)<\infty,$$
where $dA$ is the Lebesgue measure on $\C$. The weighted Fock space $F^p_{\alpha,w}$ is defined to be the subspace of entire functions in $L^p_{\alpha,w}$ with the inherited (quasi-)norm. If $w\equiv\frac{\alpha}{\pi}$, then we obtain the weighted spaces $L^p_{\alpha}$ and the standard Fock spaces $F^p_{\alpha}$. We refer to \cite{Zh} for a brief account on Fock spaces.

It is well-known that for any $0<p,\alpha<\infty$, the Fock space $F^p_{\alpha}$ is closed in $L^p_{\alpha}$. Hence there is an orthogonal projection $P_{\alpha}$ from $L^2_{\alpha}$ onto $F^2_{\alpha}$, which is called the Fock projection and is given by
$$P_{\alpha}(f)(z):=\frac{\alpha}{\pi}\int_{\C}f(\xi)e^{\alpha z\overline{\xi}}e^{-\alpha|\xi|^2}dA(\xi),\quad z\in\C, \quad f\in L^2_{\alpha}.$$
To characterize the weighted boundedness of $P_{\alpha}$ on the spaces $L^p_{\alpha,w}$, Isralowitz \cite{Is} introduced the restricted $A_p$ weights. Here we use $Q$ to denote a square in $\C$ with sides parallel to the coordinate axes, and write $\ell(Q)$ for its side length. As usual, $p'$ denotes the conjugate exponent of $p$, i.e. $1/p+1/p'=1$, for $1<p<\infty$. Given $1<p<\infty$, a weight $w$ is said to belong to the class $A^{\res}_p$ if for some (or any) fixed $r>0$,
$$\sup_{Q:\ell(Q)=r}\left(\frac{1}{A(Q)}\int_{Q}wdA\right)
\left(\frac{1}{A(Q)}\int_{Q}w^{-\frac{p'}{p}}dA\right)^{\frac{p}{p'}}<\infty,$$
and $w$ is said to belong to the class $A^{\res}_1$ if for some (or any) fixed $r>0$,
$$\sup_{Q:\ell(Q)=r}\left(\frac{1}{A(Q)}\int_{Q}wdA\right)\left\|w^{-1}\right\|_{L^{\infty}(Q)}<\infty.$$
It was proved in \cite[Theorem 3.1]{Is} and \cite[Proposition 2.7]{CFP} that for $1\leq p<\infty$, $P_{\alpha}$ is bounded on the weighted space $L^p_{\alpha,w}$ if and only if $w\in A^{\res}_p$. Similar to the Muckenhoupt weights, we write
$$A^{\res}_{\infty}:=\bigcup_{1\leq p<\infty}A^{\res}_p.$$
Recently, the function and operator theory on weighted Fock spaces induced by weights from $A^{\res}_{\infty}$ developed quickly; see \cite{CFP23,CFP,Ch24,Ch24-1,Ch25,CHW,CW24,Xu}.

The theory of Toeplitz operators on standard Fock spaces has drawn lots of attention; see \cite{BCI,CIL,Fu,HL,IZ,Me,WZ} and the references therein. The aim of this paper is to investigate the basic properties of Toeplitz operators acting on weighted Fock spaces $F^2_{\alpha,w}$ with $w\in A^{\res}_{\infty}$. For $z\in \C$, let $B^w_z$ denote the reproducing kernel of $F^2_{\alpha,w}$ at $z$. Then for any $f\in F^2_{\alpha,w}$,
$$f(z)=\langle f,B^w_z\rangle_{F^2_{\alpha,w}}:=\int_{\C}f(\xi)\overline{B^w_z(\xi)}e^{-\alpha|\xi|^2}w(\xi)dA(\xi).$$
Given a positive Borel measure $\mu$ on $\C$, the Toeplitz operator $T_{\mu}$ is formally defined for entire functions $f$ on $\C$ by
$$T_{\mu}f(z):=\int_{\C}f(\xi)\overline{B^w_z(\xi)}e^{-\alpha|\xi|^2}d\mu(\xi),\quad z\in\C.$$
In this paper, we consider the boundedness, compactness and membership in Schatten $p$-classes of Toeplitz operators $T_{\mu}$ on the weighted Fock spaces $F^2_{\alpha,w}$ induced by $w\in A^{\res}_{\infty}$.

For any $\gamma\in\R$, it is easy to see that the weight $w_{\gamma}(z):=(1+|z|)^{\gamma}$ belongs to $A^{\res}_{\infty}$ (see \cite[Lemma 2.1]{CFP}). Hence the weighted Fock spaces $F^p_{\alpha,w}$ induced by weights from $A^{\res}_{\infty}$ contain the Fock--Sobolev spaces introduced in \cite{CCK,CZ} as a special case, and the main results of this paper generalize the corresponding results from \cite{CIJ,WCX}.

To state our main results, we need some notions. Let $D(z,r)$ be the disk centered at $z\in\C$ with radius $r>0$. Then the average function $\widehat{\mu}_{w,r}$ is defined by
$$\widehat{\mu}_{w,r}(z):=\frac{\mu(D(z,r))}{w(D(z,r))},\quad z\in\C.$$
Here and in the sequel, we write $w(E):=\int_EwdA$ for Borel subset $E\subset\C$. Another important tool in the theory of Toeplitz operators is the Berezin transform. For $z\in\C$, let $b^w_z:=B^w_z/\|B^w_z\|_{F^2_{\alpha,w}}$ be the normalized reproducing kernel. The Berezin transform $\widetilde{\mu}$ of the positive Borel measure $\mu$ is defined by  
$$\widetilde{\mu}(z):=\int_{\C}|b^w_z(\xi)|^2e^{-\alpha|\xi|^2}d\mu(\xi),\quad z\in\C.$$
We are now ready to state our first result, which characterizes the boundedness of $T_{\mu}$ on $F^2_{\alpha,w}$.

\begin{theorem}\label{bdd}
Let $\alpha>0$, $w\in A^{\res}_{\infty}$, and let $\mu$ be a positive Borel measure on $\C$. Then there exists $\delta\in(0,1)$ such that the following conditions are equivalent:
\begin{enumerate}
	\item [(a)] $T_{\mu}$ is bounded on $F^2_{\alpha,w}$;
	\item [(b)] $\widetilde{\mu}$ is bounded on $\C$;
	\item [(c)] for some (or any) $r\in(0,\delta)$, $\widehat{\mu}_{w,r}$ is bounded on $\C$.
\end{enumerate}
Moreover,
$$\|T_{\mu}\|_{F^2_{\alpha,w}\to F^2_{\alpha,w}}\asymp\sup_{z\in\C}\widetilde{\mu}(z)\asymp\sup_{z\in\C}\widehat{\mu}_{w,r}(z).$$
\end{theorem}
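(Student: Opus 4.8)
The plan is to prove the cycle of implications $(a)\Rightarrow(b)\Rightarrow(c)\Rightarrow(a)$, keeping track of constants at each stage so that the norm equivalences fall out automatically from chaining the estimates. The engine behind the whole argument is a package of reproducing kernel estimates that I would establish first: a diagonal estimate $\|B^w_z\|^2_{F^2_{\alpha,w}}\asymp e^{\alpha|z|^2}/w(D(z,r))$ for any fixed $r$, a lower bound showing that the normalized kernel density $|b^w_z(\xi)|^2e^{-\alpha|\xi|^2}$ stays comparable to $1/w(D(z,r))$ for $\xi$ in a small disk $D(z,r)$, and a weighted sub-mean-value inequality $|f(\xi)|^2e^{-\alpha|\xi|^2}\lesssim w(D(\xi,r))^{-1}\int_{D(\xi,r)}|f|^2e^{-\alpha|\cdot|^2}w\,dA$ valid for entire $f$, the last exploiting the doubling and reverse-H\"older features packaged in $w\in A^{\res}_{\infty}$. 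The threshold $\delta$ in the statement is precisely the radius up to which the kernel lower bound survives.

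For $(a)\Rightarrow(b)$, I would first record the identity $\langle T_\mu f,g\rangle_{F^2_{\alpha,w}}=\int_{\C}f\overline{g}\,e^{-\alpha|\xi|^2}\,d\mu$, obtained from the definition of $T_\mu$ by Fubini together with the reproducing property and the Hermitian symmetry $B^w_z(\xi)=\overline{B^w_\xi(z)}$. Taking $f=g=b^w_z$ gives $\widetilde{\mu}(z)=\langle T_\mu b^w_z,b^w_z\rangle$, whence $\sup_z\widetilde{\mu}(z)\le\|T_\mu\|$ by Cauchy--Schwarz and $\|b^w_z\|=1$. For $(b)\Rightarrow(c)$, I would restrict the defining integral for $\widetilde{\mu}(z)$ to $D(z,r)$ and insert the kernel lower bound: for $r<\delta$, $\widetilde{\mu}(z)\ge\int_{D(z,r)}|b^w_z|^2e^{-\alpha|\cdot|^2}d\mu\gtrsim \mu(D(z,r))/w(D(z,r))=\widehat{\mu}_{w,r}(z)$, so that $\sup_z\widehat{\mu}_{w,r}(z)\lesssim\sup_z\widetilde{\mu}(z)$.

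The main obstacle is $(c)\Rightarrow(a)$. Here I would estimate the bilinear form directly via $|\langle T_\mu f,g\rangle|\le\int_{\C}|f||g|e^{-\alpha|\xi|^2}d\mu$. Fix a lattice $\{a_k\}$ for which the disks $D(a_k,r)$ cover $\C$ with bounded overlap, split the integral over this cover, bound $|f||g|e^{-\alpha|\cdot|^2}$ on each $D(a_k,r)$ by its supremum, and control that supremum through the weighted sub-mean-value inequality over the enlarged disk $D(a_k,2r)$; simultaneously $\mu(D(a_k,r))\le\|\widehat{\mu}_{w,r}\|_\infty\,w(D(a_k,r))$ and $w(D(a_k,r))\asymp w(D(a_k,2r))$ by doubling. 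A Cauchy--Schwarz step on the sum over $k$, followed by the bounded overlap of the disks $D(a_k,2r)$, then telescopes the right-hand side into $\|\widehat{\mu}_{w,r}\|_\infty\|f\|_{F^2_{\alpha,w}}\|g\|_{F^2_{\alpha,w}}$, giving $\|T_\mu\|\lesssim\sup_z\widehat{\mu}_{w,r}(z)$. The delicate points are the uniform control of the overlap constants and the comparison of $w$ on concentric fixed-scale disks, both of which I expect to draw on the $A^{\res}_{\infty}$ hypothesis rather than on any smoothness of $w$.

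Finally, chaining the three inequalities closes the cycle and yields $\|T_\mu\|\asymp\sup_z\widetilde{\mu}(z)\asymp\sup_z\widehat{\mu}_{w,r}(z)$. Since conditions $(a)$ and $(b)$ make no reference to $r$, the equivalence itself shows that $(c)$ holds for some $r\in(0,\delta)$ if and only if it holds for every such $r$; alternatively, a direct covering argument comparing $\widehat{\mu}_{w,r}$ and $\widehat{\mu}_{w,r'}$ for $r,r'\in(0,\delta)$ establishes the ``some (or any)'' statement independently.
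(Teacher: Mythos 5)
Your proposal is correct and follows the same architecture as the paper: the cycle $(a)\Rightarrow(b)\Rightarrow(c)\Rightarrow(a)$, with $(a)\Rightarrow(b)$ via the identity $\widetilde{\mu}(z)=\langle T_{\mu}b^w_z,b^w_z\rangle_{F^2_{\alpha,w}}$ and $(b)\Rightarrow(c)$ via the local lower bound for the reproducing kernel on $D(z,\delta)$ together with the norm estimate $\|B^w_z\|^2_{F^2_{\alpha,w}}\asymp e^{\alpha|z|^2}/w(D(z,r))$ --- exactly the paper's Theorem 3.3 and Lemma 3.1, and indeed $\delta$ comes from the same place. The one genuine difference is in $(c)\Rightarrow(a)$: the paper simply invokes the known Carleson embedding theorem for $F^2_{\alpha,w}$ (Theorem 1.2 of Cascante--F\`abrega--Pel\'aez) to get $\|I_d\|^2_{F^2_{\alpha,w}\to L^2_{\alpha}(\mu)}\asymp\sup_z\widehat{\mu}_{w,r}(z)$, whereas you re-prove that embedding from scratch by a lattice covering with bounded overlap, the weighted sub-mean-value inequality on enlarged disks, and the comparison $w(D(a_k,r))\asymp w(D(a_k,2r))$. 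Your route is self-contained (and is essentially how the cited theorem is proved), at the cost of redoing work the paper outsources. One point of order you should make explicit: before boundedness is known you cannot write $\langle T_{\mu}f,g\rangle_{F^2_{\alpha,w}}=\int_{\C}f\overline{g}\,e^{-\alpha|\xi|^2}d\mu$ for general $f,g$, since the Fubini step needs absolute convergence; the paper handles this with a separate lemma establishing the identity on polynomials under hypothesis (c) and then concludes by density. In your argument the covering estimate should therefore be run first, for $f,g$ in a dense class (polynomials), to justify the interchange, and only then extended --- the ingredients are all present in your sketch, but the logical order matters. The ``some or any'' remark and the chaining of constants are handled exactly as in the paper.
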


Our next result concerns the essential norm estimate of $T_{\mu}$. Recall that for a bounded linear operator $T$ on a Hilbert space $H$, the essential norm of $T$ is defined by
$$\|T\|_{e,H\to H}:=\inf_{K\in\mathcal{K}(H)}\|T-K\|_{H\to H},$$
where $\mathcal{K}(H)$ denotes the algebra of compact operators on $H$. It is clear that $T$ is compact if and only if $\|T\|_{e,H\to H}=0$.

\begin{theorem}\label{ess-norm}
Let $\alpha>0$, $w\in A^{\res}_{\infty}$, and let $\mu$ be a positive Borel measure on $\C$ such that $T_{\mu}$ is bounded on $F^2_{\alpha,w}$. Then there exists $\delta\in(0,1)$ such that for $r\in(0,\delta)$,
$$\|T_{\mu}\|_{e,F^2_{\alpha,w}\to F^2_{\alpha,w}}
\asymp\limsup_{|z|\to\infty}\widetilde{\mu}(z)\asymp\limsup_{|z|\to\infty}\widehat{\mu}_{w,r}(z).$$
\end{theorem}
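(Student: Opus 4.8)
The plan is to establish the cyclic chain of estimates
$$\limsup_{|z|\to\infty}\widehat\mu_{w,r}(z)\lesssim\limsup_{|z|\to\infty}\widetilde\mu(z)\lesssim\|T_\mu\|_{e,F^2_{\alpha,w}\to F^2_{\alpha,w}}\lesssim\limsup_{|z|\to\infty}\widehat\mu_{w,r}(z),$$
after which all three quantities are mutually comparable and the theorem follows. The first inequality is the elementary direction: from the on-diagonal lower estimate for the normalized kernel, $|b^w_z(\xi)|^2e^{-\alpha|\xi|^2}\gtrsim w(D(z,r))^{-1}$ for $\xi\in D(z,r)$ (a reproducing kernel estimate available from the earlier part of the paper), restricting the defining integral of $\widetilde\mu$ to $D(z,r)$ gives $\widetilde\mu(z)\gtrsim\mu(D(z,r))/w(D(z,r))=\widehat\mu_{w,r}(z)$ pointwise, and hence the claimed limsup inequality.

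For the lower (middle) inequality I would test against the normalized reproducing kernels. A direct computation using the reproducing property shows $\langle T_\mu f,g\rangle_{F^2_{\alpha,w}}=\int_{\C}f\overline{g}\,e^{-\alpha|\xi|^2}d\mu$, so that $\langle T_\mu b^w_z,b^w_z\rangle_{F^2_{\alpha,w}}=\widetilde\mu(z)$ and therefore $\|T_\mu b^w_z\|_{F^2_{\alpha,w}}\geq\widetilde\mu(z)$ by Cauchy--Schwarz, since $\|b^w_z\|_{F^2_{\alpha,w}}=1$. The key point is that $b^w_z\to0$ weakly in $F^2_{\alpha,w}$ as $|z|\to\infty$; this follows from the off-diagonal Gaussian decay of the kernel (so that $\langle B^w_a,b^w_z\rangle\to0$ for each fixed $a$), the uniform norm bound, and the density of finite combinations of kernels. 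Consequently $\|Kb^w_z\|_{F^2_{\alpha,w}}\to0$ for every compact $K$, and for any such $K$,
$$\|T_\mu-K\|_{F^2_{\alpha,w}\to F^2_{\alpha,w}}\geq\limsup_{|z|\to\infty}\|(T_\mu-K)b^w_z\|_{F^2_{\alpha,w}}\geq\limsup_{|z|\to\infty}\widetilde\mu(z);$$
taking the infimum over $K$ gives the middle inequality.

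For the upper inequality I would fix $R>0$ and split $\mu=\mu\chi_{D(0,R)}+\mu\chi_{\C\setminus D(0,R)}$. I expect $T_{\mu\chi_{D(0,R)}}$ to be compact: if $f_n\to0$ weakly in $F^2_{\alpha,w}$, then $f_n\to0$ uniformly on the compact set $\overline{D(0,R)}$ by the pointwise growth estimates and normality, and since $\mu\chi_{D(0,R)}$ is a finite Carleson measure (its averaging function is bounded, as $T_\mu$ is bounded) one obtains $\|T_{\mu\chi_{D(0,R)}}f_n\|_{F^2_{\alpha,w}}\to0$. Hence $\|T_\mu\|_{e,F^2_{\alpha,w}\to F^2_{\alpha,w}}\leq\|T_{\mu\chi_{\C\setminus D(0,R)}}\|_{F^2_{\alpha,w}\to F^2_{\alpha,w}}$, and Theorem~\ref{bdd} applied to the truncated measure bounds the right-hand side by a constant times $\sup_z\widehat{(\mu\chi_{\C\setminus D(0,R)})}_{w,r}(z)$. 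Since $D(z,r)\subset\C\setminus D(0,R)$ once $|z|>R+r$ while the truncated average vanishes for $|z|<R-r$, this supremum is at most $\sup_{|z|>R-r}\widehat\mu_{w,r}(z)$; letting $R\to\infty$ yields the last inequality.

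Chaining the three estimates closes the argument. I expect the main obstacle to be the upper bound, and specifically the compactness of $T_{\mu\chi_{D(0,R)}}$ in the weighted setting where the kernel is not given by an explicit formula, together with the verification that Theorem~\ref{bdd} applies to the family of truncated measures with constants independent of $R$, so that the passage $R\to\infty$ genuinely recovers the limsup.
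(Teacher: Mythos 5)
Your proposal is correct in outline and establishes the same cyclic chain of inequalities as the paper, but the upper estimate follows a genuinely different route. For $\|T_{\mu}\|_{e,F^2_{\alpha,w}\to F^2_{\alpha,w}}\lesssim\limsup_{|z|\to\infty}\widehat{\mu}_{w,r}(z)$ you truncate the \emph{measure}, writing $\mu=\mu\chi_{D(0,R)}+\mu\chi_{\C\setminus D(0,R)}$, show that the Toeplitz operator of the compactly supported part is compact, and apply Theorem \ref{bdd} to the tail measure; the paper instead truncates the \emph{operator}, conjugating $T_{\mu}$ by the co-finite-rank projections $R_n=I-Q_n$ attached to an orthonormal basis and proving the estimate \eqref{claim} for $\|R_nf\|^2_{L^2_{\alpha}(\mu)}$ by splitting the integral over $D(0,t)$ and its complement. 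Both arguments rest on the same ingredients --- the identity $\langle T_{\mu}f,g\rangle_{F^2_{\alpha,w}}=\langle f,g\rangle_{L^2_{\alpha}(\mu)}$, the Carleson embedding controlled by $\widehat{\mu}_{w,r}$, and dominated convergence on a bounded disk where $\mu$ is finite --- and your version is arguably more economical, since linearity of $\mu\mapsto T_{\mu}$ lets you quote Theorem \ref{bdd} for the truncated measures with constants depending only on $\alpha$, $w$ and $r$ (not on $R$), which is exactly the point you flag as needing verification and which does hold. Your first two inequalities coincide with the paper's: the kernel lower bound of Theorem \ref{pointwise-below} gives \eqref{tilde>hat}, and testing $T_{\mu}-K$ against $b^w_z$ gives the middle bound.

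One justification needs repair. You derive the weak convergence $b^w_z\to0$ from ``off-diagonal Gaussian decay of the kernel,'' but no such decay is available in this setting: the only off-diagonal upper bound in the paper is \eqref{pu}, obtained from Cauchy--Schwarz, which yields $|\langle B^w_a,b^w_z\rangle|\lesssim e^{\frac{\alpha}{2}|a|^2}w(D(a,r))^{-1/2}$ --- bounded in $z$ but not decaying --- and the absence of explicit kernel formulas is precisely the obstacle the paper emphasizes. The conclusion is nevertheless true (Proposition \ref{weakly0}) and can be reached without any off-diagonal information: for each fixed $f\in F^2_{\alpha,w}$, Lemma \ref{pointwise} and Lemma \ref{norm} give
$$|\langle f,b^w_z\rangle_{F^2_{\alpha,w}}|^2\asymp|f(z)|^2e^{-\alpha|z|^2}w(D(z,1))\lesssim\int_{D(z,1)}|f(\xi)|^2e^{-\alpha|\xi|^2}w(\xi)dA(\xi)\to0$$
as $|z|\to\infty$, this being the tail of a convergent integral; alternatively, the paper tests against polynomials and invokes their density (Lemma \ref{dense}). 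With that substitution your argument is complete.
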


As an immediate consequence, we have the following description for the compactness of $T_{\mu}$.

\begin{corollary}\label{cpt}
Let $\alpha>0$, $w\in A^{\res}_{\infty}$, and let $\mu$ be a positive Borel measure on $\C$. Then there exists $\delta\in(0,1)$ such that the following conditions are equivalent:
\begin{enumerate}
	\item [(a)] $T_{\mu}$ is compact on $F^2_{\alpha,w}$;
	\item [(b)] $\widetilde{\mu}(z)\to0$ as $|z|\to\infty$;
	\item [(c)] for some (or any) $r\in(0,\delta)$, $\widehat{\mu}_{w,r}(z)\to0$ as $|z|\to\infty$.
\end{enumerate}
\end{corollary}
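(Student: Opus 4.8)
The plan is to deduce the corollary directly from the essential-norm estimate of Theorem~\ref{ess-norm}, together with the standard fact recorded just above that statement: a bounded operator $T$ on a Hilbert space is compact precisely when $\|T\|_{e,H\to H}=0$. Since the averaged quantities $\widetilde{\mu}$ and $\widehat{\mu}_{w,r}$ are nonnegative, the vanishing of a $\limsup$ at infinity is the same as convergence to zero. Thus, once I know that $T_{\mu}$ is bounded so that Theorem~\ref{ess-norm} applies, the entire statement becomes a formal consequence of the displayed $\asymp$-identity for the essential norm. I would take $\delta$ to be the constant furnished by Theorem~\ref{ess-norm}.

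For the implication (a)$\Rightarrow$(b),(c), I note that a compact operator is in particular bounded, so Theorem~\ref{ess-norm} is available and yields
$$0=\|T_{\mu}\|_{e,F^2_{\alpha,w}\to F^2_{\alpha,w}}\asymp\limsup_{|z|\to\infty}\widetilde{\mu}(z)\asymp\limsup_{|z|\to\infty}\widehat{\mu}_{w,r}(z)$$
for every $r\in(0,\delta)$. By nonnegativity, both $\limsup$'s must vanish, which is exactly $\widetilde{\mu}(z)\to0$ and $\widehat{\mu}_{w,r}(z)\to0$ as $|z|\to\infty$, giving (b) and (c).

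For the reverse direction I must first secure the boundedness of $T_{\mu}$ before invoking the essential-norm estimate. Assuming (b) (respectively (c)), the hypothesis that $\widetilde{\mu}$ (resp. $\widehat{\mu}_{w,r}$) tends to zero at infinity forces global boundedness: outside a large disk the quantity is small by assumption, and on the complementary compact set it is bounded by the local finiteness of $\mu$ together with the reproducing-kernel estimates used in the proof of Theorem~\ref{bdd}. Hence $\sup_{z\in\C}\widetilde{\mu}(z)<\infty$ (resp. $\sup_{z\in\C}\widehat{\mu}_{w,r}(z)<\infty$), and Theorem~\ref{bdd} gives that $T_{\mu}$ is bounded. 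Theorem~\ref{ess-norm} then yields $\|T_{\mu}\|_{e,F^2_{\alpha,w}\to F^2_{\alpha,w}}\asymp\limsup_{|z|\to\infty}\widetilde{\mu}(z)=0$ (resp. the same estimate through $\widehat{\mu}_{w,r}$), so the essential norm vanishes and $T_{\mu}$ is compact, establishing (a).

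The only step requiring genuine care, and the one I expect to be the main obstacle, is precisely this local-boundedness argument that upgrades ``vanishing at infinity'' to ``bounded on all of $\C$'', since it is what licenses the application of Theorem~\ref{bdd} in the reverse direction. Once boundedness of $T_{\mu}$ is in hand, no further work is needed: the equivalence of (b) and (c) is automatic because both are tied, via $\asymp$, to the same essential norm, and the equivalence with (a) is the compactness$\Leftrightarrow$zero-essential-norm criterion.
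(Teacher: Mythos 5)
Your proof is correct and is exactly the paper's intended argument: the corollary is stated there as an immediate consequence of Theorem~\ref{ess-norm} combined with the compactness-iff-zero-essential-norm criterion, and your extra care in first securing boundedness of $T_{\mu}$ (a hypothesis of Theorem~\ref{ess-norm} that the corollary does not assume) is the only point needing attention. For the implication starting from (b), the cleanest way to obtain that boundedness is to pass to (c) first via the pointwise inequality \eqref{tilde>hat}, which requires no boundedness hypothesis; bounding $\widetilde{\mu}$ on a compact set directly is less immediate than bounding $\widehat{\mu}_{w,r}$ there, since $\widetilde{\mu}(z)$ is a global integral rather than a local average.
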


Let $T$ be a compact operator on a separable Hilbert space $H$. Then there exist orthonormal sets $\{\sigma_n\}$, $\{e_n\}$ in $H$ and a non-increasing sequence $\{s_n(T)\}$ of non-negative numbers tending to $0$ such that for all $x\in H$,
$$Tx=\sum_{n\geq1}s_n(T)\langle x,e_n\rangle_{H}\sigma_n.$$
This is the canonical decomposition of $T$ and $s_n(T)$ is called the $n$th singular value of $T$. For $p>0$, the operator $T$ is said to be in the Schatten $p$-class $\s_p(H)$ if
$$\|T\|^p_{\s_p(H)}:=\sum_{n\geq1}s_n(T)^p<\infty.$$
We refer to \cite[Chapter 1]{Zhudisk} for a brief account on Schatten classes.

Our third result is the following characterization of the Schatten $p$-class Toeplitz operators $T_{\mu}$ on $F^2_{\alpha,w}$ for $p\geq1$.

\begin{theorem}\label{schatten-p}
Let $\alpha>0$, $w\in A^{\res}_{\infty}$, and let $\mu$ be a positive Borel measure on $\C$. Then there exists $\delta\in(0,1)$ such that for $p\geq1$ the following conditions are equivalent:
\begin{enumerate}
	\item [(a)] $T_{\mu}\in \s_{p}(F^2_{\alpha,w})$;
	\item [(b)] $\widetilde{\mu}\in L^p(\C,dA)$;
	\item [(c)] for some (or any) $r\in(0,\delta)$, $\widehat{\mu}_{w,r}\in L^p(\C,dA)$.
\end{enumerate}
Moreover,
$$\|T_{\mu}\|_{\s_p(F^2_{\alpha,w})}\asymp\|\widetilde{\mu}\|_{L^p(\C,dA)}\asymp\|\widehat{\mu}_{w,r}\|_{L^p(\C,dA)}.$$
\end{theorem}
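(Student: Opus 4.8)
The plan is to fix $r\in(0,\delta)$, with $\delta$ small enough that the near-diagonal kernel lower bound used below is valid, and a separated lattice $\{a_j\}_{j\ge1}\subset\C$ for which the disks $D_j:=D(a_j,r)$ cover $\C$ with uniformly bounded overlap; I then aim to prove the two-sided estimates
$$\|T_{\mu}\|_{\s_p(F^2_{\alpha,w})}\asymp\Big(\sum_{j}\widehat{\mu}_{w,r}(a_j)^p\Big)^{1/p}\asymp\|\widehat{\mu}_{w,r}\|_{L^p(\C,dA)}\asymp\|\widetilde{\mu}\|_{L^p(\C,dA)},$$
from which the equivalence of (a), (b), (c) and the ``Moreover'' part follow at once. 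The whole argument runs on the two reproducing kernel estimates established earlier: the on-diagonal size $\|B^w_z\|^2_{F^2_{\alpha,w}}\asymp e^{\alpha|z|^2}/w(D(z,r))$ together with its near-diagonal lower bound, and the off-diagonal exponential decay of $B^w_z(\xi)$. Combined with the doubling property of $A^{\res}_{\infty}$ weights (so that $w(D(z,r))\asymp w(D(\xi,r))$ when $|z-\xi|\le 2r$) and a sub-mean-value estimate for $|f|^2e^{-\alpha|\cdot|^2}$, these yield that the analysis operator $f\mapsto\{\langle f,b^w_{a_j}\rangle_{F^2_{\alpha,w}}\}_j$ is bounded from $F^2_{\alpha,w}$ into $\ell^2$ (a Bessel bound for $\{b^w_{a_j}\}$), and that $\widehat{\mu}_{w,r}(z)\asymp\widehat{\mu}_{w,r}(a_j)$ for $z\in D_j$, which gives the middle $\asymp$ above.

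Next I would prove (b)$\Leftrightarrow$(c). The near-diagonal lower kernel bound gives $|b^w_z(\xi)|^2e^{-\alpha|\xi|^2}\gtrsim 1/w(D(z,r))$ for $\xi\in D(z,r)$, so that $\widehat{\mu}_{w,r}(z)\lesssim\widetilde{\mu}(z)$ pointwise and $\|\widehat{\mu}_{w,r}\|_{L^p}\lesssim\|\widetilde{\mu}\|_{L^p}$. For the reverse, splitting the Berezin integral over the lattice and inserting the off-diagonal decay produces $\widetilde{\mu}(z)\lesssim\sum_j e^{-c|z-a_j|}\widehat{\mu}_{w,r}(a_j)$, hence $\widetilde{\mu}(z)\lesssim\int_{\C}\widehat{\mu}_{w,r}(\xi)\,G(z,\xi)\,dA(\xi)$ for a kernel $G$ with $\sup_z\int_{\C}G(z,\xi)\,dA(\xi)+\sup_\xi\int_{\C}G(z,\xi)\,dA(z)<\infty$; Schur's test then gives $\|\widetilde{\mu}\|_{L^p}\lesssim\|\widehat{\mu}_{w,r}\|_{L^p}$. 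This disposes of (b)$\Leftrightarrow$(c) and the last $\asymp$.

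For the upper bound in (c)$\Rightarrow$(a), I would use the bounded overlap of $\{D_j\}$ and the sub-mean-value estimate to dominate the quadratic form,
$$\langle T_{\mu}f,f\rangle_{F^2_{\alpha,w}}\lesssim\sum_j\widehat{\mu}_{w,r}(a_j)\,\|f\chi_{D(a_j,2r)}\|^2_{L^2_{\alpha,w}}=\Big\langle\Big(\sum_j\widehat{\mu}_{w,r}(a_j)J_j\Big)f,f\Big\rangle_{F^2_{\alpha,w}},$$
where $J_j:=T_{\chi_{D(a_j,2r)}w\,dA}$ satisfies $\mathrm{tr}(J_j)\asymp1$ uniformly (by the trace identity $\mathrm{tr}(T_\nu)=\int_{\C}\|B^w_\xi\|^2e^{-\alpha|\xi|^2}d\nu$) and $\sum_jJ_j\lesssim I$. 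Since $0\le T_{\mu}\lesssim\sum_j\widehat{\mu}_{w,r}(a_j)J_j$ as positive operators, one has $s_n(T_\mu)\lesssim s_n(\sum_j\widehat{\mu}_{w,r}(a_j)J_j)$ and hence $\|T_{\mu}\|_{\s_p}\lesssim\|\sum_j\widehat{\mu}_{w,r}(a_j)J_j\|_{\s_p}$. The linear map $\{c_j\}\mapsto\sum_jc_jJ_j$ is bounded $\ell^1\to\s_1$ (from $\mathrm{tr}(J_j)\asymp1$) and $\ell^\infty\to\s_\infty$ (from $\sum_jJ_j\lesssim I$), so complex interpolation of the scales $[\ell^1,\ell^\infty]_\theta=\ell^p$ and $[\s_1,\s_\infty]_\theta=\s_p$ gives $\|\sum_jc_jJ_j\|_{\s_p}\lesssim(\sum_jc_j^p)^{1/p}$, that is, $\|T_{\mu}\|_{\s_p}\lesssim(\sum_j\widehat{\mu}_{w,r}(a_j)^p)^{1/p}$.

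For the lower bound in (a)$\Rightarrow$(b), write $T_\mu=\sum_k\lambda_k\phi_k\otimes\phi_k$ spectrally; since $\|b^w_{a_j}\|=1$ and $\{b^w_{a_j}\}$ is Bessel with bound $A$, the nonnegative matrix $M_{jk}:=|\langle\phi_k,b^w_{a_j}\rangle|^2$ has row sums $\sum_kM_{jk}=1$ and column sums $\sum_jM_{jk}\le A$, so Schur's test bounds $\{c_j\}\mapsto\{\sum_kM_{jk}c_k\}$ on $\ell^p$ for $p\ge1$ and yields $\sum_j\widetilde{\mu}(a_j)^p=\sum_j\langle T_\mu b^w_{a_j},b^w_{a_j}\rangle^p\lesssim\sum_k\lambda_k^p=\|T_\mu\|^p_{\s_p}$. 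Combining with $\widehat{\mu}_{w,r}(a_j)\lesssim\widetilde{\mu}(a_j)$ closes the loop. The main obstacle throughout is extracting, from the $A^{\res}_{\infty}$ kernel estimates, the precise off-diagonal decay needed for the Bessel bound, the trace and overlap bounds for the $J_j$, and the convolution kernel $G$; and it is exactly at the two Schur/interpolation steps that the hypothesis $p\ge1$ enters, since both the triangle inequality for $\|\cdot\|_{\s_p}$ and the $\ell^p$-boundedness of the diagonal map break down for $p<1$.
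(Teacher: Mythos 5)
There is a genuine gap, and it sits exactly where you flag the ``main obstacle'': your argument repeatedly requires a \emph{pointwise off-diagonal decay} estimate of the form $|B^w_z(\xi)|\lesssim e^{-c|z-\xi|}\|B^w_z\|_{F^2_{\alpha,w}}\|B^w_\xi\|_{F^2_{\alpha,w}}$, and no such estimate is available here. The only upper bound the kernel estimates of Section \ref{rke} provide is \eqref{pu}, which is just Cauchy--Schwarz and carries no decay in $|z-\xi|$ whatsoever; Theorem \ref{pointwise-below} is a \emph{lower} bound, valid only on the small disk $D(a,\delta)$. This is not a technicality you can ``extract'': for general $w\in A^{\res}_{\infty}$ no explicit kernel formula exists, and pointwise off-diagonal decay is precisely what one does not know how to prove in this setting. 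Without it, your Schur-test step $\widetilde{\mu}(z)\lesssim\sum_j e^{-c|z-a_j|}\widehat{\mu}_{w,r}(a_j)$ collapses, so you cannot obtain $\|\widetilde{\mu}\|_{L^p}\lesssim\|\widehat{\mu}_{w,r}\|_{L^p}$; and your route into condition (b) from (a) is equally blocked, since converting the discrete bound $\sum_j\widetilde{\mu}(a_j)^p\lesssim\|T_\mu\|_{\s_p}^p$ into $\int_{\C}\widetilde{\mu}^p\,dA<\infty$ needs a comparability $\widetilde{\mu}(z)\asymp\widetilde{\mu}(a_j)$ on $D_j$ that again rests on kernel decay. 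So as written, statement (b) is never actually reached except for the easy pointwise inequality $\widehat{\mu}_{w,r}\lesssim\widetilde{\mu}$ (which is \eqref{tilde>hat} and is fine).

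The paper's proof is designed to avoid this entirely: it proves the cycle (a)$\Rightarrow$(b)$\Rightarrow$(c)$\Rightarrow$(a), handling (a)$\Rightarrow$(b) directly from the canonical decomposition $T_\mu f=\sum_n s_n\langle f,f_n\rangle f_n$ via Jensen's inequality applied to $\widetilde{\mu}(z)=\sum_n s_n|\langle b^w_z,f_n\rangle|^2$ and the identity $\int_{\C}|f_n|^2\|B^w_z\|^{-2}dA\asymp\|f_n\|^2_{F^2_{\alpha,\widehat{w}_1}}\asymp1$, and handling (c)$\Rightarrow$(a) by applying Jensen's inequality to $s_n=\|f_n\|^2_{L^2_\alpha(\mu)}$ against the measure $|f_n|^2e^{-\alpha|\cdot|^2}\widehat{w}_r\,dA$ after the sub-mean-value step — no lattice, no interpolation, no off-diagonal information. (It even yields the more general Theorem \ref{schatten-h} for convex $h$.) The salvageable parts of your proposal are the Bessel bound for $\{b^w_{a_j}\}$ and the (c)$\Rightarrow$(a) upper bound via $0\le T_\mu\lesssim\sum_j\widehat{\mu}_{w,r}(a_j)J_j$ with $\mathrm{tr}(J_j)\asymp1$, $\sum_jJ_j\lesssim I$ and $[\s_1,\s_\infty]$-interpolation: those only use the on-diagonal estimates, Lemma \ref{pointwise} and \eqref{sd}, and do go through (modulo routine radius adjustments such as $r\mapsto 2r$ when comparing $\widehat{\mu}_{w,r}(z)$ with $\widehat{\mu}_{w,2r}(a_j)$). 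But to close the equivalence with (b) you would need to replace the Schur/off-diagonal step by something like the paper's Jensen argument.
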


The main obstacle to prove our results is the lack of explicit expression for the reproducing kernels $B^w_a$. By adapting a method from \cite{LR}, we establish the following local pointwise estimate for $B^w_a$ from below (see Theorem \ref{pointwise-below}): there exists $\delta\in(0,1)$ such that for any $a,z\in\C$ with $|z-a|<\delta$,
$$|B^w_a(z)|\gtrsim\left\|B^w_a\right\|_{F^2_{\alpha,w}}\cdot\left\|B^w_z\right\|_{F^2_{\alpha,w}},$$
which plays an essential role in the proof of the main results.

The rest part of this paper is organized as follows. Some preliminaries are given in Section \ref{pre}. Section \ref{rke} is devoted to establishing some estimates for the reproducing kernels $B^w_z$. In Section \ref{bdd-cpt} we prove Theorems \ref{bdd} and \ref{ess-norm}, while Section \ref{sp} contains the proof of Theorem \ref{schatten-p}. Finally, in Section \ref{app} we give some applications of the main results to Volterra operators and weighted composition operators.

Throughout the paper, the notation $A\lesssim B$ (or $B\gtrsim A$) means that there exists a nonessential constant $C>0$ such that $A\leq CB$. If $A\lesssim B\lesssim A$, then we write $A\asymp B$.

\section{Preliminaries}\label{pre}

In this section, we give some preliminary results that will be used in the sequel.

We first recall the following estimates on $A_{\infty}$-type weights. Here, for any $r>0$, we treat $r\Z^{2}$ as a subset of $\C$ in the natural way. For $z\in\C$ and $r>0$, we write $Q_r(z)$ to denote the square centered at $z$ with side length $\ell(Q)=r$.

\begin{lemma}\label{esti}
Let $w\in A^{\mathrm{restricted}}_{\infty}$ and $r>0$.
\begin{enumerate}
	\item [(1)] There exists $C>1$ such that for any $\nu,\nu'\in r\Z^{2}$,
	\begin{equation}\label{ss}
	\frac{w(Q_r(\nu))}{w(Q_r(\nu'))}\leq C^{|\nu-\nu'|}.
	\end{equation}
	\item [(2)] For any fixed $M,N\geq1$,
	\begin{equation}\label{sd}
		w(Q_r(z))\asymp w(Q_{Nr}(u))\asymp w(D(z,r))\asymp w(D(u,Nr))
	\end{equation}
	whenever $z,u\in\C$ satisfy $|z-u|<Mr$.
	\item [(3)] For any $\alpha>0$,
	\begin{equation}\label{int}
		\int_{\C}e^{-\alpha|z|^2}w(z)dA(z)<\infty.
	\end{equation}
\end{enumerate}
\end{lemma}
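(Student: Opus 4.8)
The plan is to extract from the restricted $A_p$ condition a single ``engine'' inequality comparing Lebesgue and $w$-measure of subsets, use it to prove the doubling statement (1), and then deduce (2) and (3) by elementary geometric decompositions. Since $w\in A^{\res}_{\infty}$, I first fix $p$ with $1<p<\infty$ and $w\in A^{\res}_p$ (the class $A^{\res}_1$ embeds into $A^{\res}_p$, and the borderline $p=1$ case is handled identically using the $L^{\infty}$ bound in place of H\"older's inequality). For the engine, given a square $Q$ of side length $s$ and a measurable $E\subset Q$, I would write $A(E)=\int_E w^{1/p}w^{-1/p}$ and apply H\"older with exponents $p,p'$ to get $A(E)\le w(E)^{1/p}\big(\int_Q w^{-p'/p}\big)^{1/p'}$. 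Feeding the $A^{\res}_p$ bound at scale $s$ into the last factor converts this into
$$\frac{A(E)}{A(Q)}\lesssim\Big(\frac{w(E)}{w(Q)}\Big)^{1/p},$$
with implied constant depending only on the $A^{\res}_p$ constant of $w$ and on $s$. This is the only place the weight hypothesis is used.

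For (1), two edge-adjacent lattice squares $Q_r(\nu),Q_r(\nu')$ both sit inside a common square $\widehat Q$ of side $2r$; applying the engine at scale $2r$ with $E=Q_r(\nu)$ and then with $E=Q_r(\nu')$ gives $w(\widehat Q)\lesssim w(Q_r(\nu))$ and $w(\widehat Q)\lesssim w(Q_r(\nu'))$, whence $w(Q_r(\nu))\asymp w(Q_r(\nu'))$ with a constant $C_0$ independent of the pair. Here I use the ``for some (or any) $r$'' part of the definition to invoke the condition at scale $2r$. A general pair $\nu,\nu'$ is then joined by a grid path of at most $\sqrt2\,|\nu-\nu'|/r$ edge-adjacent steps; multiplying the one-step bound along the chain yields $w(Q_r(\nu))/w(Q_r(\nu'))\le C_0^{\sqrt2|\nu-\nu'|/r}$, which is (1) with $C:=C_0^{\sqrt2/r}$.

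For (2), I would show that each of the four quantities is comparable to $w(Q_r(\nu_0))$ for a reference lattice point $\nu_0$ nearest $z$. A disk is sandwiched between an inscribed and a circumscribed square ($Q_r(z)\subset D(z,r)\subset Q_{2r}(z)$ and $Q_{\sqrt2 Nr}(u)\subset D(u,Nr)\subset Q_{2Nr}(u)$), so it suffices to compare squares. Any square of side $\asymp r$ or $\asymp Nr$ centered within distance $Mr$ of $z$ can be both packed by and covered by a bounded number (depending only on $M,N$) of lattice squares $Q_r(\cdot)$, each lying within distance $\lesssim(M+N)r$ of $\nu_0$ and hence comparable to $w(Q_r(\nu_0))$ by (1); summing the finitely many comparable pieces gives the claim. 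Since all four quantities are comparable to the same $w(Q_r(\nu_0))$, they are comparable to one another, with constants depending only on $M$, $N$, and the $A^{\res}_p$ constant of $w$.

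For (3), decompose $\C=\bigsqcup_{\nu\in r\Z^2}Q_r(\nu)$. On $Q_r(\nu)$ one has $|z|\ge|\nu|-r/\sqrt2$, hence $|z|^2\ge\frac12|\nu|^2-C_r$ and $e^{-\alpha|z|^2}\lesssim e^{-\frac{\alpha}{2}|\nu|^2}$. Combining this with the bound $w(Q_r(\nu))\le C^{|\nu|}w(Q_r(0))$ from (1) (taking $\nu'=0$) gives
$$\int_{\C}e^{-\alpha|z|^2}w\,dA\lesssim w(Q_r(0))\sum_{\nu\in r\Z^2}e^{-\frac{\alpha}{2}|\nu|^2}C^{|\nu|},$$
and the series converges because the Gaussian factor decays like $e^{-c|\nu|^2}$ while $C^{|\nu|}=e^{(\log C)|\nu|}$ grows only linearly in the exponent, so the quadratic term dominates for large $|\nu|$. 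The main obstacle is the engine inequality feeding into (1): obtaining two-sided control of adjacent squares from the single-scale restricted $A_p$ condition, which forces the passage to scale $2r$ and uses the scale-invariance built into the definition. Once that comparability is secured, (2) and (3) are bookkeeping—finite geometric decompositions for (2), and a convergent lattice sum in which super-Gaussian decay beats the exponential growth coming from (1) for (3).
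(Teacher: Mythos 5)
The paper does not actually prove this lemma; it is quoted from \cite{Is} and \cite{CFP}, so your self-contained argument is necessarily a different route from the paper's. Your route is the standard one for $A_{\infty}$-type weights and is essentially correct: the H\"older ``engine'' $A(E)/A(Q)\lesssim\bigl(w(E)/w(Q)\bigr)^{1/p}$ for $E\subset Q$ is exactly the right tool (and is the only place the weight hypothesis enters), the chain argument for (1) is sound (the step count $|a-a'|+|b-b'|\le\sqrt2\,|\nu-\nu'|/r$ is correct, and the two-sided one-step comparison does follow from applying the engine to both cells inside a common square of side $2r$), and in (3) the Gaussian does beat the single exponential $C^{|\nu|}$, so the lattice sum converges.

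One step in (2) is stated too loosely. The claim that every square of side $\asymp r$ centered near $z$ ``can be packed by a bounded number of lattice squares $Q_r(\cdot)$'' fails for the lower bound when the square has side less than $2r$ and is not aligned with the lattice: $Q_r(z)$ itself generally contains no full lattice cell, and the same applies to $Q_{Nr}(u)$ and $Q_{\sqrt2 Nr}(u)$ when $N$ is close to $1$. This is not a structural gap because your own engine repairs it in one line: since $\|z-\nu_0\|_{\infty}\le r/2$ one has $Q_r(z)\subset Q_{2r}(\nu_0)$ with area ratio $1/4$, so the engine at scale $2r$ gives $w(Q_r(z))\gtrsim w(Q_{2r}(\nu_0))\ge w(Q_r(\nu_0))$; similarly $Q_{Nr}(u)\subset Q_{(N+2M+2)r}(\nu_0)$ with area ratio bounded below by a constant depending only on $M,N$. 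I would replace the packing sentence by this engine application (the covering direction for the upper bounds is fine as written). Note also that you invoke the $A^{\res}_p$ condition at several scales ($2r$, $(N+2M+2)r$, etc.); this is legitimate only because of the ``for some (or any) $r$'' clause in the definition, which you correctly identify as load-bearing.
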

\begin{proof}
See \cite[Lemma 3.4]{Is} and \cite[Remark 2.3, Lemma 2.8]{CFP}.
\end{proof}

It follows from \eqref{ss} and \eqref{sd} that, if $w\in A^{\res}_{\infty}$, then for any $r>0$, there exists $C>1$ such that for any $z,u\in\C$,
\begin{equation}\label{dd}
w(D(z,r))\lesssim C^{|z-u|}w(D(u,r))
\end{equation}
with implicit constant depending only on $w$ and $r$.

The following lemma establishes some pointwise estimates for entire functions, which can be found in \cite[Lemma 3.1]{CFP}.

\begin{lemma}\label{pointwise}
Let $\alpha,p,r>0$, $w\in A^{\res}_{\infty}$, and let $f$ be an entire function on $\C$. Then for any $z\in\C$,
$$|f(z)|^pe^{-\frac{p\alpha}{2}|z|^2}\lesssim
\frac{1}{w(D(z,r))}\int_{D(z,r)}|f(\xi)|^pe^{-\frac{p\alpha}{2}|\xi|^2}w(\xi)dA(\xi),$$
where the implicit constant is independent of $f$ and $z$.
\end{lemma}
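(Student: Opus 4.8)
The plan is to reduce the weighted estimate to the classical sub-mean value inequality for subharmonic functions, using a holomorphic correction to absorb the Gaussian factor, and then to insert the weight $w$ by applying the resulting unweighted bound at a \emph{smaller} exponent and invoking the $A^{\res}_s$ condition.

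First I would fix $z$ and remove the Gaussian. Writing $\psi_z(\xi)=\alpha\bar z\xi-\frac{\alpha}{2}|z|^2$, the function $F_z:=f\,e^{-\psi_z}$ is entire, and a direct computation gives $\mathrm{Re}\,\psi_z(\xi)-\frac{\alpha}{2}|\xi|^2=-\frac{\alpha}{2}|\xi-z|^2$, so that for every exponent $\tau>0$,
$$|f(\xi)|^\tau e^{-\frac{\tau\alpha}{2}|\xi|^2}=|F_z(\xi)|^\tau e^{-\frac{\tau\alpha}{2}|\xi-z|^2},\qquad |f(z)|^\tau e^{-\frac{\tau\alpha}{2}|z|^2}=|F_z(z)|^\tau.$$
Since $|F_z|^\tau$ is subharmonic, the sub-mean value inequality over $D(z,r)$ together with the bound $e^{-\frac{\tau\alpha}{2}|\xi-z|^2}\ge e^{-\frac{\tau\alpha}{2}r^2}$ on that disk yields the unweighted pointwise estimate
$$|f(z)|^\tau e^{-\frac{\tau\alpha}{2}|z|^2}\lesssim\frac{1}{A(D(z,r))}\int_{D(z,r)}|f(\xi)|^\tau e^{-\frac{\tau\alpha}{2}|\xi|^2}\,dA(\xi),$$
valid for every $\tau>0$ with constant depending only on $\tau,\alpha,r$.

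The key step is to promote this to the weighted inequality. Since $w\in A^{\res}_{\infty}=\bigcup_{1\le s<\infty}A^{\res}_s$, fix $s$ with $w\in A^{\res}_s$; I may assume $s>1$ (the case $s=1$ is easier and follows from $\|w^{-1}\|_{L^\infty(D(z,r))}\lesssim A(D(z,r))/w(D(z,r))$). I would apply the unweighted estimate with the reduced exponent $\tau=p/s$ and raise both sides to the power $s$, obtaining, with $g:=|f|^p e^{-\frac{p\alpha}{2}|\cdot|^2}$,
$$|f(z)|^p e^{-\frac{p\alpha}{2}|z|^2}\lesssim\left(\frac{1}{A(D(z,r))}\int_{D(z,r)}g^{1/s}\,dA\right)^{s}.$$
Writing $g^{1/s}=(gw)^{1/s}w^{-1/s}$ and applying H\"older's inequality with exponents $s,s'$ cleanly separates $g$ from the weight:
$$\int_{D(z,r)}g^{1/s}\,dA\le\left(\int_{D(z,r)}g\,w\,dA\right)^{1/s}\left(\int_{D(z,r)}w^{-s'/s}\,dA\right)^{1/s'}.$$
Finally, the $A^{\res}_s$ condition, transferred from squares to the disk $D(z,r)$ via \eqref{sd}, gives $\big(\int_{D(z,r)}w^{-s'/s}\,dA\big)^{s/s'}\lesssim A(D(z,r))^{s}/w(D(z,r))$; substituting this bound collapses all powers of $A(D(z,r))$ and produces exactly the asserted estimate.

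The main obstacle is precisely the coupling between $g$ and $w$: one cannot dominate $dA$ by $w\,dA$ pointwise, and a naive H\"older split at the original exponent $p$ leaves a factor of $g$ trapped against $w^{-1/s}$, which cannot be controlled. The device that overcomes this is applying the unweighted estimate at the reduced exponent $p/s$ — legitimate because that estimate holds for \emph{all} exponents — so that the subsequent H\"older inequality isolates the Muckenhoupt dual weight $w^{-s'/s}$ and makes the $A^{\res}_s$ condition directly applicable. The only other point requiring care is the passage of the $A^{\res}_s$ condition from the fixed squares in its definition to the disk $D(z,r)$, which is routine given the comparabilities in \eqref{sd}.
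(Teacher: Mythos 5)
Your proof is correct. The paper does not actually prove this lemma but simply cites \cite[Lemma 3.1]{CFP}, and your argument --- the classical sub-mean value inequality applied to $|f e^{-\psi_z}|^{\tau}$ at the \emph{reduced} exponent $\tau=p/s$, followed by H\"older's inequality and the $A^{\res}_s$ condition transferred from squares to disks via \eqref{sd} --- is exactly the standard proof of that cited result, with all steps (including the uniformity of the constants in $z$ and $f$) checking out.
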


We next recall some equivalent norms for the spaces $F^p_{\alpha,w}$. To this end, for $r>0$, let $\widehat{w}_r$ be the weight defined by
$$\widehat{w}_r(z):=w(D(z,r)),\quad z\in\C.$$
The following lemma was established in \cite[Lemma 3.2]{CHW}.

\begin{lemma}\label{hat-norm}
Let $p,\alpha>0$ and $w\in A^{\res}_{\infty}$. Then for any $r>0$, $F^p_{\alpha,w}=F^p_{\alpha,\widehat{w}_r}$ with equivalent norms.
\end{lemma}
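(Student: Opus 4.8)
The plan is to show directly that the two (quasi-)norms are comparable, i.e.\ that for every entire $f$,
$$\int_{\C}|f(z)|^pe^{-\frac{p\alpha}{2}|z|^2}w(z)\,dA(z)\asymp\int_{\C}|f(z)|^pe^{-\frac{p\alpha}{2}|z|^2}w(D(z,r))\,dA(z),$$
since the left-hand side is $\|f\|^p_{F^p_{\alpha,w}}$ and the right-hand side is $\|f\|^p_{F^p_{\alpha,\widehat{w}_r}}$ by the very definition of $\widehat{w}_r$. Writing $g(z):=|f(z)|^pe^{-\frac{p\alpha}{2}|z|^2}$ for brevity, I would prove the two inequalities separately; both rest on Lemma \ref{pointwise} together with Fubini's theorem, using the elementary observation that $\xi\in D(z,r)$ if and only if $z\in D(\xi,r)$. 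Once the integrals are shown to be comparable, equality of the two spaces as sets and equivalence of the (quasi-)norms follow by taking $p$-th roots.

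For the inequality $\|f\|^p_{F^p_{\alpha,\widehat{w}_r}}\lesssim\|f\|^p_{F^p_{\alpha,w}}$, I would apply Lemma \ref{pointwise} at the point $z$ with radius $r$ and weight $w$, which gives
$$g(z)\,w(D(z,r))\lesssim\int_{D(z,r)}g(\xi)w(\xi)\,dA(\xi).$$
Integrating over $z\in\C$ and interchanging the order of integration, the inner region $\{z:\xi\in D(z,r)\}=D(\xi,r)$ has area $\pi r^2$, so
$$\int_{\C}g(z)\,w(D(z,r))\,dA(z)\lesssim\int_{\C}g(\xi)w(\xi)\Big(\int_{D(\xi,r)}dA(z)\Big)\,dA(\xi)=\pi r^2\int_{\C}g(\xi)w(\xi)\,dA(\xi),$$
which is exactly the claimed bound.

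For the reverse inequality, I would first rewrite the $\widehat{w}_r$-norm by Fubini,
$$\|f\|^p_{F^p_{\alpha,\widehat{w}_r}}=\int_{\C}g(z)\int_{D(z,r)}w(\xi)\,dA(\xi)\,dA(z)=\int_{\C}w(\xi)\Big(\int_{D(\xi,r)}g(z)\,dA(z)\Big)\,dA(\xi),$$
and then bound the inner integral from below by the unweighted sub-mean value estimate, namely Lemma \ref{pointwise} applied with the constant weight $w\equiv1\in A^{\res}_{\infty}$ and radius $r$, which yields $g(\xi)\lesssim\frac{1}{\pi r^2}\int_{D(\xi,r)}g(z)\,dA(z)$, that is $\int_{D(\xi,r)}g(z)\,dA(z)\gtrsim g(\xi)$. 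Substituting this into the last display gives $\|f\|^p_{F^p_{\alpha,\widehat{w}_r}}\gtrsim\int_{\C}g(\xi)w(\xi)\,dA(\xi)=\|f\|^p_{F^p_{\alpha,w}}$, completing the equivalence.

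There is no genuinely hard step here: the entire analytic content is packaged into Lemma \ref{pointwise}, and the argument reduces to a pair of Fubini interchanges. The one point worth emphasizing is the asymmetry between the two directions — the upper bound for the $\widehat{w}_r$-norm uses the \emph{weighted} pointwise estimate (with the given weight $w$), whereas the lower bound uses only the \emph{unweighted} sub-mean value inequality for $|f|^pe^{-\frac{p\alpha}{2}|z|^2}$. One must also confirm that the implicit constants furnished by Lemma \ref{pointwise} are independent of $f$ and of the center point, which is precisely what that lemma guarantees.
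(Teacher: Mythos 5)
Your proof is correct. Note that the paper does not actually prove this lemma at all: it simply cites \cite[Lemma 3.2]{CHW}, so your argument serves as a self-contained replacement built only on material stated in the paper. Both directions are sound: the upper bound for $\|f\|_{F^p_{\alpha,\widehat{w}_r}}$ is exactly Lemma \ref{pointwise} with the weight $w$ followed by Tonelli (the integrands are non-negative, so the interchange needs no integrability hypothesis and the comparison holds in $[0,\infty]$, which also gives equality of the spaces as sets), and the lower bound legitimately invokes Lemma \ref{pointwise} with the constant weight, since $w\equiv1$ trivially satisfies the $A^{\res}_p$ condition for every $p$ and hence lies in $A^{\res}_{\infty}$ with $w(D(\xi,r))=\pi r^2$. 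The implicit constants depend only on $p$, $\alpha$, $r$ and $w$, never on $f$ or the point, exactly as needed.
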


%
%
%
%

\section{Reproducing kernel estimates}\label{rke}

The purpose of this section is to establish some estimates for the reproducing kernels $B^w_z$ of the spaces $F^2_{\alpha,w}$ and show the weak convergence of normalized reproducing kernels. We begin with the following norm estimate.

\begin{lemma}\label{norm}
Let $\alpha,r>0$ and $w\in A_{\infty}^{\res}$. Then for $a\in\C$,
$$B^w_a(a)=\|B^w_a\|^2_{F^2_{\alpha,w}}\asymp\frac{e^{\alpha|a|^2}}{w(D(a,r))},$$
where the implicit constant is independent of $a$.
\end{lemma}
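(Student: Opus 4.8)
The plan is to separate the two assertions. The identity $B^w_a(a)=\|B^w_a\|^2_{F^2_{\alpha,w}}$ is immediate from the reproducing property: taking $f=B^w_a$ in $f(a)=\langle f,B^w_a\rangle_{F^2_{\alpha,w}}$ gives $B^w_a(a)=\langle B^w_a,B^w_a\rangle_{F^2_{\alpha,w}}=\|B^w_a\|^2_{F^2_{\alpha,w}}$. The content therefore lies in the two-sided estimate, which I would phrase through the evaluation functional: by Cauchy--Schwarz and the reproducing identity, $\|B^w_a\|_{F^2_{\alpha,w}}=\sup\{|f(a)|:\|f\|_{F^2_{\alpha,w}}\le1\}$, so it suffices to produce matching upper and lower pointwise bounds for $|f(a)|$.

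For the upper bound $\|B^w_a\|^2_{F^2_{\alpha,w}}\lesssim e^{\alpha|a|^2}/w(D(a,r))$ I would simply invoke Lemma \ref{pointwise} with $p=2$: for any entire $f$,
$$|f(a)|^2e^{-\alpha|a|^2}\lesssim\frac{1}{w(D(a,r))}\int_{D(a,r)}|f(\xi)|^2e^{-\alpha|\xi|^2}w(\xi)\,dA(\xi)\le\frac{\|f\|^2_{F^2_{\alpha,w}}}{w(D(a,r))},$$
and taking the supremum over the unit ball yields the estimate at once.

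For the lower bound I would test against the un-normalized standard kernel $k_a(z):=e^{\alpha\overline{a}z}$, which is entire with $k_a(a)=e^{\alpha|a|^2}$. Completing the square in the exponent gives
$$\|k_a\|^2_{F^2_{\alpha,w}}=e^{\alpha|a|^2}\int_{\C}e^{-\alpha|z-a|^2}w(z)\,dA(z),$$
so everything reduces to the Gaussian-average estimate $\int_{\C}e^{-\alpha|z-a|^2}w(z)\,dA(z)\asymp w(D(a,r))$. The lower inequality is trivial (restrict to $D(a,r)$, where $e^{-\alpha|z-a|^2}\ge e^{-\alpha r^2}$). For the matching upper inequality I would decompose $\C$ into the lattice squares $\{Q_r(\nu)\}_{\nu\in r\Z^2}$, bound $e^{-\alpha|z-a|^2}\lesssim e^{-c|\nu-a|^2}$ on $Q_r(\nu)$ for some $c>0$, and combine \eqref{sd} with the growth control \eqref{dd} to write $w(Q_r(\nu))\asymp w(D(\nu,r))\lesssim C^{|\nu-a|}w(D(a,r))$. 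The resulting series $\sum_{\nu\in r\Z^2}e^{-c|\nu-a|^2}C^{|\nu-a|}$ converges and is bounded uniformly in $a$ because the Gaussian decay dominates the exponential weight growth. This yields $\|k_a\|^2_{F^2_{\alpha,w}}\asymp e^{\alpha|a|^2}w(D(a,r))$, so the normalized function $k_a/\|k_a\|_{F^2_{\alpha,w}}$ gives $|f(a)|^2\asymp e^{\alpha|a|^2}/w(D(a,r))$ and the lower bound follows.

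The main obstacle is precisely this last Gaussian-average estimate: one must confirm that the competition between the Gaussian kernel and the at-most-exponential growth of the masses $w(Q_r(\nu))$ always resolves in favor of the Gaussian, so that the off-diagonal lattice contributions are absorbed into the central term $w(D(a,r))$ uniformly in $a$. Once \eqref{ss}, \eqref{sd} and \eqref{dd} are in hand this is a routine summation, and the remaining steps are direct consequences of the reproducing property and Lemma \ref{pointwise}.
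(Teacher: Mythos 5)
Your proof is correct and follows essentially the same route as the paper: the identity via the reproducing property, the upper bound from Lemma \ref{pointwise} applied to the evaluation functional, and the lower bound by testing against $K_a(z)=e^{\alpha\overline{a}z}$. The only difference is that the paper cites \cite[Proposition 4.1]{CFP} for the estimate $\|K_a\|^2_{F^2_{\alpha,w}}\asymp e^{\alpha|a|^2}w(D(a,r))$, whereas you prove it directly by the lattice decomposition and the Gaussian-versus-exponential summation, which is a valid and self-contained substitute.
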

\begin{proof}
Let $L_a$ be the point evaluation at $a$ on $F^2_{\alpha,w}$. Then it is well-known that
\begin{equation}\label{delta_a}
\|B^w_a\|_{F^2_{\alpha,w}}=\|L_a\|_{F^2_{\alpha,w}\to\C}.
\end{equation}
Hence the upper estimate follows from Lemma \ref{pointwise}. To establish the lower estimate, denote $K_a(z)=e^{\alpha\overline{a}z}$. Then by \cite[Proposition 4.1]{CFP},
$$\|K_{a}\|_{F^2_{\alpha,w}}\asymp e^{\frac{\alpha}{2}|a|^2}w(D(a,r))^{1/2}.$$
Consequently,
$$\|L_a\|_{F^2_{\alpha,w}\to\C}\geq\frac{e^{\alpha|a|^2}}{\|K_a\|_{F^2_{\alpha,w}}}
\asymp\frac{e^{\frac{\alpha}{2}|a|^2}}{w(D(a,r))^{1/2}},$$
which finishes the proof.
\end{proof}

Based on Lemma \ref{norm} and the Cauchy--Schwarz inequality, we obtain that for $a,z\in\C$,
\begin{equation}\label{pu}
|B^w_a(z)|\lesssim\frac{e^{\frac{\alpha}{2}|z|^2}}{w(D(z,r))^{1/2}}\cdot\frac{e^{\frac{\alpha}{2}|a|^2}}{w(D(a,r))^{1/2}}
\end{equation}
with implicit constant independent of $a$ and $z$.


The following theorem establishes a local pointwise estimate for the reproducing kernels $B^w_z$ from below, which plays an essential role in the proofs of the main results. Our method is adapted from \cite[Lemma 3.6]{LR}.

\begin{theorem}\label{pointwise-below}
Let $\alpha,r>0$ and $w\in A_{\infty}^{\res}$. Then there exists $\delta=\delta(\alpha,w)\in(0,1)$ such that for $a\in\C$ and $z\in D(a,\delta)$,
$$|B^w_a(z)|\gtrsim\frac{e^{\frac{\alpha}{2}|a|^2+\frac{\alpha}{2}|z|^2}}{w(D(a,r))},$$
where the implicit constant is independent of $a$ and $z$.
\end{theorem}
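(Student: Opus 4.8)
The plan is to reduce the desired lower bound to a uniform estimate for a single holomorphic function that equals $1$ at the center, by first stripping off both the Gaussian growth and the oscillatory phase of $B^w_a$. Since $B^w_a(a)=\|B^w_a\|^2_{F^2_{\alpha,w}}$ is a positive real number, I would introduce the entire function
$$g(z):=\frac{B^w_a(z)}{B^w_a(a)}\,e^{-\alpha\overline{a}(z-a)},$$
which satisfies $g(a)=1$. Using the elementary identity $\frac{\alpha}{2}|z|^2-\frac{\alpha}{2}|a|^2-\alpha\,\mathrm{Re}(\overline{a}(z-a))=\frac{\alpha}{2}|z-a|^2$, the key point is that combining the upper estimate \eqref{pu} for $|B^w_a(z)|$ with the lower bound $B^w_a(a)\gtrsim e^{\alpha|a|^2}/w(D(a,r))$ from Lemma \ref{norm} yields, after the Gaussian factors cancel,
$$|g(z)|\lesssim e^{\frac{\alpha}{2}|z-a|^2}\left(\frac{w(D(a,r))}{w(D(z,r))}\right)^{1/2}.$$
By \eqref{sd} the weight ratio is $\asymp1$ once $|z-a|$ is bounded, so there is a constant $C_0=C_0(\alpha,w,r)$ with $|g(z)|\le C_0$ on $D(a,2\delta_0)$ for any fixed $\delta_0\in(0,1)$; crucially $C_0$ is independent of $a$.

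With $g$ bounded and $g(a)=1$ on $D(a,2\delta_0)$, I would finish by a Cauchy estimate. Applying the Cauchy integral formula for $g'$ on circles of radius $\delta_0$ gives $|g'|\le C_0/\delta_0$ on $D(a,\delta_0)$, whence $|g(z)-1|=|g(z)-g(a)|\le (C_0/\delta_0)|z-a|$ for $z\in D(a,\delta_0)$. Choosing $\delta:=\min\{\delta_0,\delta_0/(2C_0)\}$ forces $|g(z)-1|\le\frac12$, i.e. $|g(z)|\ge\frac12$, for all $z\in D(a,\delta)$. Undoing the normalization via $|B^w_a(z)|=B^w_a(a)\,|g(z)|\,e^{\alpha\,\mathrm{Re}(\overline{a}(z-a))}$ and using $\mathrm{Re}(\overline{a}(z-a))=\frac12(|z|^2-|a|^2-|z-a|^2)$ together with the lower bound from Lemma \ref{norm} then yields
$$|B^w_a(z)|\gtrsim \frac{e^{\frac{\alpha}{2}(|a|^2+|z|^2)}}{w(D(a,r))}\,e^{-\frac{\alpha}{2}|z-a|^2}\ge e^{-\frac{\alpha}{2}\delta^2}\,\frac{e^{\frac{\alpha}{2}(|a|^2+|z|^2)}}{w(D(a,r))}$$
on $D(a,\delta)$, which is the claim. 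Note that no separate argument ruling out zeros of $B^w_a$ near $a$ is needed: the bound $|g|\ge\frac12$ already excludes them.

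I expect the conceptual crux to be the second step — identifying the correction factor $e^{-\alpha\overline{a}(z-a)}$ (the exponential appearing in the unweighted Fock kernel) that simultaneously cancels the $e^{\frac{\alpha}{2}|z|^2}$ growth and the phase oscillation, turning two a priori $a$-dependent estimates into a single uniform bound $|g|\le C_0$. Without this normalization the naive ratio $B^w_a(z)/B^w_a(a)$ grows like $e^{\alpha|a|\,|z-a|}$ and oscillates with $|a|$, so one cannot prevent $B^w_a$ from vanishing near the diagonal; after the normalization everything reduces to a radius-scale derivative estimate. One bookkeeping point is that $\delta$ is required to depend only on $\alpha,w$: since $C_0$ above depends on $r$, I would fix a reference radius (say $r=1$) to define $\delta$, and then transfer to arbitrary $r$ using $w(D(a,r))\asymp w(D(a,1))$ from \eqref{sd}, absorbing the comparison into the implicit constant of the final estimate.
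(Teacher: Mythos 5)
Your proof is correct, but it takes a genuinely different route from the paper. The paper follows the Lin--Rochberg scheme: it decomposes $F^2_{\alpha,w}=F^2_{\alpha,w}(a)\oplus\mathcal{V}_a$, proves that the division operator $S_af=f/(\cdot-a)$ is bounded on the subspace of functions vanishing at $a$ (uniformly in $a$), deduces that for $|z-a|<\delta=\tfrac{1}{2}\|S_a\|^{-1}$ the reproducing kernel $B^{w,a}_z$ of that subspace satisfies $\|B^{w,a}_z\|\leq\tfrac12\|B^w_z\|$, and then reads off the lower bound for $|B^w_a(z)|$ from the identity $B^{w,a}_z=B^w_z-\frac{B^w_z(a)}{B^w_a(a)}B^w_a$ evaluated at $z$. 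You instead normalize the kernel directly: the Bargmann-type factor $e^{-\alpha\overline{a}(z-a)}$ cancels both the Gaussian growth and the phase, and the two-sided estimates already available (the Cauchy--Schwarz bound \eqref{pu}, Lemma \ref{norm}, and the weight comparison \eqref{sd}) give a uniform bound $|g|\leq C_0$ on a fixed disk with $g(a)=1$, after which a Cauchy gradient estimate forces $|g|\geq\tfrac12$ on a smaller disk; undoing the normalization costs only a harmless factor $e^{-\alpha\delta^2/2}$. Your argument is shorter and more elementary: it avoids introducing the subspace $F^2_{\alpha,w}(a)$ and, in particular, sidesteps the uniform-in-$a$ bound for $\|S_a\|$ (which in the paper rests on a uniform smallness estimate for $w(D(a,1/k))/w(D(a,1))$), while making the dependence of $\delta$ on the constants fully explicit. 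The paper's operator-theoretic route is the one that generalizes to settings where one manipulates the subspace kernels $B^{w,a}_z$ for other purposes, but for this particular estimate both proofs are complete; your final remark on fixing a reference radius $r=1$ so that $\delta=\delta(\alpha,w)$ and absorbing the $r$-dependence into the implicit constant via \eqref{sd} correctly handles the only bookkeeping subtlety.
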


To prove the above theorem, for each fixed $a\in\C$ we consider the subspace $F^2_{\alpha,w}(a)$ of $F^2_{\alpha,w}$ defined by
$$F^2_{\alpha,w}(a):=\{f\in F^2_{\alpha,w}:f(a)=0\}.$$
Let $\mathcal{V}_a$ be the one-dimensional subspace spanned by the reproducing kernel $B^w_a$. Then, noting that for any $f\in F^2_{\alpha,w}$,
$$f=f-\frac{f(a)}{B^w_a(a)}B^w_a+\frac{f(a)}{B^w_a(a)}B^w_a,$$
we have
$$F^2_{\alpha,w}=F^2_{\alpha,w}(a)\oplus\mathcal{V}_a.$$
Let the operator $S_a$ be defined for $f\in F^2_{\alpha,w}(a)$ by
$$S_af(z):=\frac{f(z)}{z-a},\quad z\in\C.$$
We have the following lemma.

\begin{lemma}\label{Sa-bdd}
Let $\alpha>0$ and $w\in A^{\res}_{\infty}$. The operator $S_a$ is bounded from $F^2_{\alpha,w}(a)$ into $F^2_{\alpha,w}$.
\end{lemma}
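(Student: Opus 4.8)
The plan is to estimate the norm directly. Writing $g = S_a f = f/(\cdot-a)$, which is entire because $f(a)=0$, I must show
$$\|S_a f\|^2_{F^2_{\alpha,w}}=\int_{\C}\frac{|f(z)|^2}{|z-a|^2}e^{-\alpha|z|^2}w(z)\,dA(z)\lesssim\|f\|^2_{F^2_{\alpha,w}},$$
with a constant independent of $a$ (uniformity will be needed when this lemma is fed into Theorem~\ref{pointwise-below}). Away from $a$ the singular factor is harmless: on $\{|z-a|\ge 1/2\}$ one has $|z-a|^{-2}\le 4$, so that part is bounded by $4\|f\|^2_{F^2_{\alpha,w}}$. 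The whole problem is therefore the behaviour on the small disk $D(a,1/2)$, where $f$ vanishes at the centre.

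The main obstacle is that the Gaussian weight $e^{-\alpha|z|^2}$ is far from constant on $D(a,1/2)$ for large $|a|$: freezing it naively costs a factor $e^{O(|a|)}$ and destroys uniformity. To remove this, I would twist by the standard kernel and set $\Phi(z):=f(z)e^{-\alpha\overline{a}z+\frac{\alpha}{2}|a|^2}$. Then $\Phi$ is entire, $\Phi(a)=0$, and a direct computation gives the pointwise identity $|\Phi(z)|^2e^{-\alpha|z-a|^2}=|f(z)|^2e^{-\alpha|z|^2}$, so that $\|f\|^2_{F^2_{\alpha,w}}=\int_{\C}|\Phi|^2e^{-\alpha|z-a|^2}w\,dA$ and $\|S_a f\|^2_{F^2_{\alpha,w}}=\int_{\C}|z-a|^{-2}|\Phi|^2e^{-\alpha|z-a|^2}w\,dA$. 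The advantage is that the new Gaussian $e^{-\alpha|z-a|^2}$ is centred at $a$ and satisfies $e^{-\alpha|z-a|^2}\asymp 1$ on any fixed neighbourhood of $a$, so all localisations now take place at unit scale around $a$.

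On $D(a,1/2)$ I would write $\psi:=\Phi/(\cdot-a)$ (entire) and recover it from the values of $\Phi$ on the annulus $\{1\le|\xi-a|\le 2\}$, which avoids the centre. Applying Cauchy's formula on each circle $|\xi-a|=\rho$, $\rho\in[1,2]$, and averaging in $\rho$ yields the area representation
$$\psi(z)=\frac{1}{2\pi}\int_{\{1\le|\xi-a|\le 2\}}\frac{\Phi(\xi)}{(\xi-z)|\xi-a|}\,dA(\xi),\qquad z\in D(a,1/2),$$
whence $|\psi(z)|\lesssim\int_{\{1\le|\xi-a|\le 2\}}|\Phi(\xi)|\,dA(\xi)$ since $|\xi-z|\ge 1/2$ and $|\xi-a|\ge 1$ there. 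To control the right-hand side I would invoke Lemma~\ref{pointwise} with $p=2$ applied to $f$, which in the twisted variables reads $|\Phi(\xi)|^2e^{-\alpha|\xi-a|^2}\lesssim w(D(\xi,r))^{-1}\int_{D(\xi,r)}|\Phi|^2e^{-\alpha|\cdot-a|^2}w\,dA$; since $e^{-\alpha|\xi-a|^2}\asymp 1$ and $w(D(\xi,r))\asymp w(D(a,r))$ on the annulus by \eqref{sd}, this gives $|\Phi(\xi)|^2\lesssim\|f\|^2_{F^2_{\alpha,w}}/w(D(a,r))$ uniformly there, hence $\sup_{D(a,1/2)}|\psi|^2\lesssim\|f\|^2_{F^2_{\alpha,w}}/w(D(a,r))$.

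Finally I would integrate: using $e^{-\alpha|z-a|^2}\le 1$ and $w(D(a,1/2))\asymp w(D(a,r))$ from \eqref{sd},
$$\int_{D(a,1/2)}\frac{|\Phi|^2}{|z-a|^2}e^{-\alpha|z-a|^2}w\,dA=\int_{D(a,1/2)}|\psi|^2e^{-\alpha|z-a|^2}w\,dA\lesssim\frac{\|f\|^2_{F^2_{\alpha,w}}}{w(D(a,r))}\,w(D(a,1/2))\asymp\|f\|^2_{F^2_{\alpha,w}}.$$
Adding the two regions gives $\|S_a f\|_{F^2_{\alpha,w}}\lesssim\|f\|_{F^2_{\alpha,w}}$ with a constant depending only on $\alpha$ and $w$. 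I expect the kernel twist in the second step to be the crux: it is exactly what converts the uncontrolled Gaussian oscillation on $D(a,1/2)$ into a unit-scale estimate to which the $A^{\res}_{\infty}$ tools \eqref{sd} and Lemma~\ref{pointwise} apply uniformly in $a$.
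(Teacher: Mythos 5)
Your proof is correct, and it takes a genuinely different route from the paper. The paper argues by absorption: it first checks qualitatively that $S_af\in F^2_{\alpha,w}$ (via a local factorization $f=(z-a)g$ near $a$), then splits $\|S_af\|^2$ over $D(a,1/k)$ and its complement, bounds the near part by $\frac{w(D(a,1/k))}{w(D(a,1))}\|S_af\|^2$ using the Cauchy--Schwarz inequality against $B^w_z$ together with Lemma \ref{norm} and \eqref{sd}, chooses $k=k(\alpha,w)$ large enough that this is at most $\frac{1}{2}\|S_af\|^2$, absorbs it, and bounds the far part trivially by $k^2\|f\|^2$. You instead estimate the near part directly: the twist $\Phi=f\,e^{-\alpha\overline{a}z+\frac{\alpha}{2}|a|^2}$ recentres the Gaussian at $a$ (the identity $|\Phi|^2e^{-\alpha|z-a|^2}=|f|^2e^{-\alpha|z|^2}$ is correct), the averaged Cauchy formula over the annulus $1\le|\xi-a|\le2$ is a valid representation of $\psi=\Phi/(\cdot-a)$ on $D(a,1/2)$, and Lemma \ref{pointwise} plus \eqref{sd} give the required uniform sup-bound on the annulus. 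Your approach dispenses with the a priori finiteness step (which the paper needs in order to legitimately absorb $I_1(k)$ into the left-hand side) and with the uniform-in-$a$ smallness of $w(D(a,1/k))/w(D(a,1))$ as $k\to\infty$, at the cost of the explicit kernel manipulation; both arguments produce an operator norm bound uniform in $a$, which is what the choice of $\delta$ in Theorem \ref{pointwise-below} requires.
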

\begin{proof}
Fix $f\in F^2_{\alpha,w}(a)$. Since $f(a)=0$, there exist $\epsilon>0$ and an analytic function $g$ on $D(a,\epsilon)$ such that $f(z)=(z-a)g(z)$ for $z\in D(a,\epsilon)$. Then we have
\begin{align*}
&\int_{\C}|S_af(z)|^2e^{-\alpha|z|^2}w(z)dA(z)\\
&\ =\int_{\C\setminus D(a,\epsilon/2)}\left|\frac{f(z)}{z-a}\right|^2e^{-\alpha|z|^2}w(z)dA(z)
    +\int_{D(a,\epsilon/2)}|g(z)|^2e^{-\alpha|z|^2}w(z)dA(z)\\
&\ \leq\frac{4}{\epsilon^2}\|f\|^2_{F^2_{\alpha,w}}
    +\sup_{z\in D(a,\epsilon/2)}|g(z)|^2\cdot w(D(a,\epsilon/2))<\infty,
\end{align*}
which gives that $S_af\in F^2_{\alpha,w}$. Now, for any positive integer $k$,
\begin{align*}
\|S_af\|^2_{F^2_{\alpha,w}}&=\left(\int_{D(a,1/k)}+\int_{\C\setminus D(a,1/k)}\right)|S_af(z)|^2e^{-\alpha|z|^2}w(z)dA(z)\\
&=:I_1(k)+I_2(k).
\end{align*}
It follows from the Cauchy--Schwarz inequality, Lemma \ref{norm} and the estimate \eqref{sd} that
\begin{align*}
I_1(k)&=\int_{D(a,1/k)}|S_af(z)|^2e^{-\alpha|z|^2}w(z)dA(z)\\
&\leq\int_{D(a,1/k)}\|S_af\|^2_{F^2_{\alpha,w}}\|B^w_z\|^2_{F^2_{\alpha,w}}e^{-\alpha|z|^2}w(z)dA(z)\\
&\asymp\|S_af\|^2_{F^2_{\alpha,w}}\int_{D(a,1/k)}\frac{w(z)}{w(D(z,1))}dA(z)\\
&\asymp\frac{w(D(a,1/k))}{w(D(a,1))}\|S_af\|^2_{F^2_{\alpha,w}}.
\end{align*}
Hence we can choose a sufficiently large $k$, depending only on $\alpha$ and $w$, such that $I_1(k)\leq\frac{1}{2}\|S_af\|^2_{F^2_{\alpha,w}}$. Consequently,
\begin{align*}
\|S_af\|^2_{F^2_{\alpha,w}}&\leq2\int_{\C\setminus D(a,1/k)}|S_af(z)|^2e^{-\alpha|z|^2}w(z)dA(z)\\
&=2\int_{\C\setminus D(a,1/k)}\left|\frac{f(z)}{z-a}\right|^2e^{-\alpha|z|^2}w(z)dA(z)\\
&\leq2k^2\|f\|^2_{F^2_{\alpha,w}}.
\end{align*}
Since $f\in F^2_{\alpha,w}(a)$ is arbitrary, we conclude that $S_a$ is bounded from $F^2_{\alpha,w}(a)$ into $F^2_{\alpha,w}$.
\end{proof}

We are now ready to prove Theorem \ref{pointwise-below}.
\begin{proof}[Proof of Theorem \ref{pointwise-below}]
Fix $a\in\C$ and let $B^{w,a}_z$ be the reproducing kernel of $F^2_{\alpha,w}(a)$ at $z\in\C$. Then the point evaluation $L^a_z$ on $F^2_{\alpha,w}(a)$ satisfies
\begin{equation}\label{Laz}
\|L^a_z\|_{F^2_{\alpha,w}(a)\to\C}=\|B^{w,a}_z\|_{F^2_{\alpha,w}}
\end{equation}
and
$$L^a_z=(z-a)L_zS_a,$$
where $L_z$ is the point evaluation on $F^2_{\alpha,w}$. In view of Lemma \ref{Sa-bdd}, we choose $\delta=\frac{1}{2\|S_a\|_{F^2_{\alpha,w}(a)\to F^2_{\alpha,w}}}$. Consequently, for $z\in D(a,\delta)$,
$$\|L^a_z\|_{F^2_{\alpha,w}(a)\to\C}\leq|z-a|\cdot\|L_z\|_{F^2_{\alpha,w}\to\C}\cdot\|S_a\|_{F^2_{\alpha,w}(a)\to F^2_{\alpha,w}}
\leq\frac{1}{2}\|L_z\|_{F^2_{\alpha,w}\to\C},$$
which, combined with \eqref{delta_a} and \eqref{Laz}, implies that
\begin{equation}\label{1/2}
\|B^{w,a}_z\|_{F^2_{\alpha,w}}\leq\frac{1}{2}\|B^w_z\|_{F^2_{\alpha,w}}.
\end{equation}
On the other hand, for any $f\in F^2_{\alpha,w}(a)\subset F^2_{\alpha,w}$,
$$\langle f,B^{w,a}_z\rangle_{F^2_{\alpha,w}}=f(z)=\langle f,B^w_z\rangle_{F^2_{\alpha,w}}
=\left\langle f,B^w_z-\frac{B^w_z(a)}{B^w_a(a)}B^w_a\right\rangle_{F^2_{\alpha,w}}.$$
Hence
$$B^{w,a}_z=B^w_z-\frac{B^w_z(a)}{B^w_a(a)}B^w_a.$$
This, together with \eqref{1/2}, gives that for $z\in D(a,\delta)$,
$$\frac{1}{4}\|B^w_z\|^2_{F^2_{\alpha,w}}\geq\|B^{w,a}_z\|^2_{F^2_{\alpha,w}}
=B^{w,a}_z(z)=B^w_z(z)-\frac{|B^w_{a}(z)|^2}{B^w_a(a)},$$
which, in conjunction with Lemma \ref{norm} and the inequality \eqref{sd}, yields that
$$|B^w_a(z)|\gtrsim\|B^w_a\|_{F^2_{\alpha,w}}\|B^w_z\|_{F^2_{\alpha,w}}
\asymp\frac{e^{\frac{\alpha}{2}|a|^2+\frac{\alpha}{2}|z|^2}}{w(D(a,r))}.$$
The proof is complete.
\end{proof}

It was proved in \cite[Theorem 3.4]{CW24} that for $w\in A^{\res}_2$, polynomials are dense in $F^2_{\alpha,w}$. The following lemma indicates that for all $p>0$ and $w\in A^{\res}_{\infty}$, polynomials are dense in $F^p_{\alpha,w}$.

\begin{lemma}\label{dense}
Let $p,\alpha>0$ and $w\in A^{\res}_{\infty}$. Suppose $f\in F^p_{\alpha,w}$, and denote $f_r(z):=f(rz)$ for $r\in(0,1)$. Then
\begin{enumerate}
	\item [(i)] $f_r\to f$ in $F^p_{\alpha,w}$ as $r\to1^-$;
	\item [(ii)] there exists a sequence $\{p_n\}$ of polynomials that converges to $f$ in $F^p_{\alpha,w}$.
\end{enumerate} 
\end{lemma}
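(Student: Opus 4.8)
The plan is to prove (i) directly for every $f\in F^p_{\alpha,w}$ by a Vitali-type argument (pointwise convergence together with uniform integrability of the tails), and then to deduce (ii) from (i) by approximating a single dilate $f_r$, with $r$ close to $1$, by its Taylor partial sums. Throughout I abbreviate $d\lambda(z):=e^{-\frac{p\alpha}{2}|z|^2}w(z)\,dA(z)$ and $g(\xi):=|f(\xi)|^p e^{-\frac{p\alpha}{2}|\xi|^2}w(\xi)$, so that $\int_\C g\,dA=\|f\|_{F^p_{\alpha,w}}^p<\infty$, and I restrict to $r\in[\tfrac12,1)$ since only $r\to1^-$ matters.

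For (i) I would split $\|f_r-f\|_{F^p_{\alpha,w}}^p=\int_{|z|\le R}|f_r-f|^p\,d\lambda+\int_{|z|>R}|f_r-f|^p\,d\lambda$. On the compact disk $\{|z|\le R\}$ the measure $\lambda$ is finite and $f(rz)\to f(z)$ uniformly as $r\to1^-$, so the first integral tends to $0$ for each fixed $R$. For the tail I use $|f_r-f|^p\lesssim|f_r|^p+|f|^p$; the $|f|^p$ piece is the tail of $\|f\|_{F^p_{\alpha,w}}^p$ and is small for large $R$ uniformly in $r$. The crux is the uniform smallness of $A_r(R):=\int_{|z|>R}|f_r|^p\,d\lambda$. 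Here the key device is to split the Gaussian as $e^{-\frac{p\alpha}{2}|z|^2}=e^{-\frac{p\alpha}{2}r^2|z|^2}e^{-\frac{p\alpha}{2}(1-r^2)|z|^2}$, apply Lemma \ref{pointwise} at the point $rz$ to the first factor, namely
$$|f(rz)|^pe^{-\frac{p\alpha}{2}r^2|z|^2}\lesssim\frac{1}{w(D(rz,1))}\int_{D(rz,1)}g(\xi)\,dA(\xi),$$
and then invoke Fubini, using that $\{z:|rz-\xi|<1\}=D(\xi/r,1/r)$. On this set $w(D(rz,1))\asymp w(D(\xi,1))$ by \eqref{sd}, $\int_{D(\xi/r,1/r)}w\,dA=w(D(\xi/r,1/r))\lesssim C^{|\xi|(1-r)/r}w(D(\xi,1))$ by \eqref{dd} and \eqref{sd}, and $|z|\ge(|\xi|-1)/r$ forces $e^{-\frac{p\alpha}{2}(1-r^2)|z|^2}\le e^{-\frac{p\alpha}{2}(1-r^2)(|\xi|-1)^2/r^2}$. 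Collecting these, the inner integral is dominated by $C^{|\xi|(1-r)/r}e^{-\frac{p\alpha}{2}(1-r^2)(|\xi|-1)^2/r^2}$, and since $1-r^2\ge1-r$ a one-variable maximization (a downward parabola in $|\xi|$ with coefficient $\asymp(1-r)$, against a linear term $\asymp(1-r)|\xi|\log C$) bounds this quantity by a constant uniformly in $r\in[\tfrac12,1)$. The constraint $|z|>R$ then forces $|\xi|>R/2-1$, giving $A_r(R)\lesssim\int_{|\xi|>R/2-1}g\,dA\to0$ as $R\to\infty$ uniformly in $r$; choosing $R$ and then $r$ completes (i).

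For (ii) I first use (i) to fix $r\in(0,1)$ with $\|f_r-f\|_{F^p_{\alpha,w}}<\varepsilon/2$, and then approximate the fixed dilate $f_r(z)=\sum_k a_k r^kz^k$ by its partial sums $p_N(z)=\sum_{k\le N}a_kr^kz^k$. The partial sums converge to $f_r$ pointwise, so by dominated convergence it suffices to produce an integrable dominating function for $\{|p_N|^pe^{-\frac{p\alpha}{2}|z|^2}w\}_N$. The natural candidate is $G(z):=\sum_k|a_k|r^k|z|^k$, which dominates every $|p_N|$; the point is that $G$ retains strictly subcritical Gaussian type. Indeed, Lemma \ref{pointwise} together with the lower bound $w(D(z,1))\gtrsim C^{-|z|}$ coming from \eqref{dd} gives the growth estimate $|f(z)|\lesssim e^{\frac{\alpha}{2}|z|^2+c|z|}$, whence sharpened Cauchy estimates (optimizing the radius $\rho\asymp\sqrt{k/\alpha}$) yield coefficient bounds forcing $G(z)\lesssim e^{\frac{\alpha}{2}r^2|z|^2+c'|z|}$ with $r^2<1$. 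Consequently $G(z)^pe^{-\frac{p\alpha}{2}|z|^2}w(z)\lesssim e^{-\frac{p\alpha}{2}(1-r^2)|z|^2+pc'|z|}w(z)$ is integrable by \eqref{int}, so dominated convergence gives $\|f_r-p_N\|_{F^p_{\alpha,w}}\to0$; combining with the choice of $r$ proves (ii).

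I expect the main obstacle to be the uniform-in-$r$ tail bound in (i): one must balance the subexponential growth of the weight, measured by \eqref{dd}, against the extra Gaussian decay margin $e^{-\frac{p\alpha}{2}(1-r^2)|z|^2}$ gained from the dilation, and the parabola maximization is precisely what makes the constant independent of $r$ as $r\to1^-$. A secondary technical point is the sharp Cauchy coefficient estimate in (ii) guaranteeing that the dominator $G$ stays of Gaussian type strictly below $\alpha/2$; the crude geometric-series bound on $\sum|a_k|R^k$ loses a factor and would only work for $r<\tfrac12$, so the optimized radius is genuinely needed.
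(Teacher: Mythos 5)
Your argument is correct, and while the overall architecture (first dilations, then Taylor polynomials of a fixed dilate) is the same as the paper's, both steps are executed differently. For (i), the paper changes variables to write $\|f_r\|^p_{F^p_{\alpha,\widehat{w}_1}}$ as an integral of $|f|^pe^{-\frac{p\alpha}{2}|z|^2}\widehat{w}_1$ against the factor $e^{-\frac{p\alpha}{2}|z|^2(r^{-2}-1)}\widehat{w}_1(z/r)/\widehat{w}_1(z)$, shows this factor is uniformly bounded, deduces $\|f_r\|\to\|f\|$ by dominated convergence, and then invokes the standard ``pointwise convergence plus convergence of norms implies norm convergence'' lemma from \cite{HKZ}; you instead run a Vitali-type argument with a uniform-in-$r$ tail bound. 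The decisive computation is the same in both versions --- balancing the weight's subexponential growth $C^{(1-r)|z|}$ from \eqref{dd} against the Gaussian margin $e^{-c(1-r)|z|^2}$ gained from dilation --- but your route avoids the auxiliary convergence lemma at the cost of the Fubini bookkeeping, and your identification of this balance as the crux is exactly right. For (ii), the paper is slicker: it shows $f_r\in F^2_{\beta}$ for $\alpha r^2<\beta<\alpha$ and establishes a bounded embedding $F^2_{\beta}\subset F^p_{\alpha,w}$, so that convergence of Taylor polynomials is outsourced to the standard Fock space $F^2_{\beta}$, where it is classical; you prove it by hand via a dominating function $G$ built from optimized Cauchy coefficient estimates. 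That works (up to an $\varepsilon$-loss in the Gaussian exponent of $G$, which is harmless since $r^2<1$ leaves room), and you correctly note that the crude geometric-series coefficient bound would not suffice --- but the embedding into $F^2_{\beta}$ accomplishes the same thing with less effort, since the interval $(\alpha r^2,\alpha)$ encodes precisely the slack your majorant argument exploits.
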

\begin{proof}
(i) By Lemma \ref{hat-norm}, $f\in F^p_{\alpha,\widehat{w}_1}$, and it suffices to show that $f_r\to f$ in $F^p_{\alpha,\widehat{w}_1}$. For any $r\in(0,1)$,
\begin{align*}
\|f_r\|^p_{F^p_{\alpha,\widehat{w}_1}}&=\int_{\C}|f(rz)|^pe^{-\frac{p\alpha}{2}|z|^2}\widehat{w}_1(z)dA(z)\\
&=\frac{1}{r^2}\int_{\C}|f(z)|^pe^{-\frac{p\alpha}{2}|z|^2}\widehat{w}_1(z)\cdot
    e^{-\frac{p\alpha}{2}|z|^2(r^{-2}-1)}\frac{\widehat{w}_1(z/r)}{\widehat{w}_1(z)}dA(z).
\end{align*}
It follows from \eqref{dd} that there exists a constant $C>1$ such that for any $z\in\C$ and $r\in(0,1)$,
$$\frac{\widehat{w}_1(z/r)}{\widehat{w}_1(z)}\lesssim C^{|z|(r^{-1}-1)}.$$
Consequently,
$$e^{-\frac{p\alpha}{2}|z|^2(r^{-2}-1)}\frac{\widehat{w}_1(z/r)}{\widehat{w}_1(z)}
\lesssim C^{|z|(r^{-1}-1)}e^{-\frac{p\alpha}{2}|z|^2(r^{-2}-1)}
\leq C^{\frac{1-r}{2p\alpha(1+r)}\log C}\leq C^{\frac{1}{2p\alpha}\log C},$$
and so the dominated convergence theorem yields $\|f_r\|_{F^p_{\alpha,\widehat{w}_1}}\to \|f\|_{F^p_{\alpha,\widehat{w}_1}}$ as $r\to1^-$. This together with the fact that $f_r\to f$ pointwisely as $r\to1^-$ gives the desired result (see for instance \cite[Lemma 3.17]{HKZ}).

(ii) We finish the proof by showing that for every $r\in (0,1)$, $f_r$ can be approximated by its Taylor polynomials in $F^p_{\alpha,w}$. To this end, fix $r\in(0,1)$ and $\beta\in(\alpha r^2,\alpha)$. Then by Lemma \ref{pointwise} and the inequality \eqref{dd},
\begin{align*}
\int_{\C}|f_r(z)|^2e^{-\beta|z|^2}dA(z)
&\lesssim\|f\|^2_{F^p_{\alpha,w}}\int_{\C}\frac{e^{-\alpha |rz|^2}}{w(D(rz,1))^{2/p}}e^{-\beta|z|^2}dA(z)\\
&\lesssim\frac{\|f\|^2_{F^p_{\alpha,w}}}{w(D(0,1))^{2/p}}\int_{\C}C^{r|z|}e^{-(\beta-\alpha r^2)|z|^2}dA(z)<\infty.
\end{align*}
Hence $f_r\in F^2_{\beta}$. Similarly, we can establish the bounded embedding $F^2_{\beta}\subset F^p_{\alpha,w}$. Therefore, if $p_n$ is the $n$th Taylor polynomial of $f_r$, then we have
$$\|f_r-p_n\|_{F^p_{\alpha,w}}\lesssim\|f_r-p_n\|_{F^2_{\beta}}\to0$$
as $n\to\infty$. The proof is complete.
\end{proof}

We end this section by the following proposition, which establishes the weak convergence of normalized reproducing kernels.

\begin{proposition}\label{weakly0}
Let $\alpha>0$ and $w\in A^{\res}_{\infty}$. Then $b^w_z$ converges to $0$ weakly in $F^2_{\alpha,w}$ as $|z|\to\infty$.
\end{proposition}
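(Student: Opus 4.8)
The plan is to show directly that $\langle f, b^w_z\rangle_{F^2_{\alpha,w}}\to 0$ as $|z|\to\infty$ for every fixed $f\in F^2_{\alpha,w}$, which is precisely the assertion of weak convergence to $0$. The starting point is the reproducing property: since $b^w_z=B^w_z/\|B^w_z\|_{F^2_{\alpha,w}}$, we have
$$\langle f, b^w_z\rangle_{F^2_{\alpha,w}}=\frac{\langle f, B^w_z\rangle_{F^2_{\alpha,w}}}{\|B^w_z\|_{F^2_{\alpha,w}}}=\frac{f(z)}{\|B^w_z\|_{F^2_{\alpha,w}}}.$$
Thus everything reduces to estimating the ratio $|f(z)|/\|B^w_z\|_{F^2_{\alpha,w}}$ from above.

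Second, I would feed in the two estimates already available. Lemma \ref{norm} gives $\|B^w_z\|^2_{F^2_{\alpha,w}}\asymp e^{\alpha|z|^2}/w(D(z,r))$, while Lemma \ref{pointwise} (with $p=2$) controls $|f(z)|^2e^{-\alpha|z|^2}$ by the local average of $|f|^2e^{-\alpha|\cdot|^2}$ against $w$ over $D(z,r)$. Combining them, the factor $w(D(z,r))$ cancels and I obtain
$$\bigl|\langle f, b^w_z\rangle_{F^2_{\alpha,w}}\bigr|^2=\frac{|f(z)|^2}{\|B^w_z\|^2_{F^2_{\alpha,w}}}\asymp|f(z)|^2e^{-\alpha|z|^2}w(D(z,r))\lesssim\int_{D(z,r)}|f(\xi)|^2e^{-\alpha|\xi|^2}w(\xi)\,dA(\xi).$$

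Finally, I would conclude by letting $|z|\to\infty$. Since $f\in F^2_{\alpha,w}$, the nonnegative function $\xi\mapsto|f(\xi)|^2e^{-\alpha|\xi|^2}w(\xi)$ is integrable on $\C$, so by absolute continuity of the integral its mass over the receding disks $D(z,r)$ tends to $0$: given $\varepsilon>0$, choose $R$ with $\int_{\{|\xi|>R\}}|f|^2e^{-\alpha|\cdot|^2}w\,dA<\varepsilon$, and then $D(z,r)\subset\{|\xi|>R\}$ once $|z|>R+r$. Hence $\langle f, b^w_z\rangle_{F^2_{\alpha,w}}\to 0$ as $|z|\to\infty$, as desired.

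The argument is essentially routine; the only point requiring a word of care is this last step, namely that the tail integral over the moving disk $D(z,r)$ vanishes, which is just the absolute continuity of a convergent integral. I note that one could alternatively exploit that $\|b^w_z\|_{F^2_{\alpha,w}}=1$ and test weak convergence only on the dense set of polynomials furnished by Lemma \ref{dense}; for a polynomial the quantity $|p(z)|/\|B^w_z\|_{F^2_{\alpha,w}}\asymp|p(z)|e^{-\frac{\alpha}{2}|z|^2}w(D(z,r))^{1/2}$ tends to $0$ because the Gaussian decay dominates the at most exponential growth of $w(D(z,r))$ recorded in \eqref{dd}. The direct approach above is shorter and avoids invoking this growth bound, so I would present that one.
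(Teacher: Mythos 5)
Your proof is correct, but it follows a genuinely different route from the paper's. The paper tests weak convergence only on the dense set of polynomials (Lemma \ref{dense}): for a polynomial $g$ it writes $|\langle g,b^w_z\rangle_{F^2_{\alpha,w}}|\asymp w(D(z,1))^{1/2}|g(z)|e^{-\frac{\alpha}{2}|z|^2}$, bounds $w(D(z,1))\lesssim C^{|z|}w(D(0,1))$ via \eqref{dd}, and lets the Gaussian kill the polynomial-times-exponential growth; the uniform bound $\|b^w_z\|_{F^2_{\alpha,w}}=1$ then upgrades this to weak convergence on all of $F^2_{\alpha,w}$ --- exactly the alternative you sketch in your closing remark. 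Your main argument instead handles an arbitrary $f\in F^2_{\alpha,w}$ directly: combining Lemma \ref{norm} with Lemma \ref{pointwise} (for $p=2$) cancels the factor $w(D(z,r))$ and gives
$$\bigl|\langle f,b^w_z\rangle_{F^2_{\alpha,w}}\bigr|^2\lesssim\int_{D(z,r)}|f(\xi)|^2e^{-\alpha|\xi|^2}w(\xi)\,dA(\xi),$$
with constant independent of $z$, after which absolute continuity of the convergent integral $\|f\|^2_{F^2_{\alpha,w}}$ finishes the job. Each step checks out. What your version buys is independence from Lemma \ref{dense} (whose proof is itself nontrivial) and from the growth bound \eqref{dd}; what the paper's version buys is that the pointwise decay estimate for $|g(z)|e^{-\frac{\alpha}{2}|z|^2}\,w(D(z,1))^{1/2}$ on polynomials is reusable elsewhere, and it avoids invoking the subharmonicity-type estimate of Lemma \ref{pointwise}. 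Either proof is acceptable; yours is arguably the more economical given the lemmas already established.
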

\begin{proof}
Using Lemma \ref{norm} and the inequality \eqref{dd}, we have for any polynomial $g$,
\begin{align*}
|\langle g,b^w_z\rangle_{F^2_{\alpha,w}}|&=\frac{|g(z)|}{\|B^w_z\|_{F^2_{\alpha,w}}}\\
&\asymp w(D(z,1))^{1/2}\cdot|g(z)|e^{-\frac{\alpha}{2}|z|^2}\\
&\lesssim w(D(0,1))^{1/2}\cdot C^{\frac{1}{2}|z|}|g(z)|e^{-\frac{\alpha}{2}|z|^2}\to0
\end{align*}
as $|z|\to\infty$. By Lemma \ref{dense}, polynomials are dense in $F^2_{\alpha,w}$, so we obtain that $b^w_z\to0$ weakly in $F^2_{\alpha,w}$ as $|z|\to\infty$.
\end{proof}

\section{Bounded and compact Toeplitz operators}\label{bdd-cpt}

In this section, we are going to prove Theorems \ref{bdd} and \ref{ess-norm}. For a positive Borel measure $\mu$ on $\C$, we use $L^2_{\alpha}(\mu)$ to denote the Hilbert space of measurable functions $f$ on $\C$ such that
$$\|f\|^2_{L^2_{\alpha}(\mu)}:=\int_{\C}|f(z)|^2e^{-\alpha|z|^2}d\mu(z)<\infty.$$
If $T_{\mu}$ is bounded on $F^2_{\alpha,w}$, then we can apply Fubini's theorem and the reproducing formula to obtain that for any $f,g\in F^2_{\alpha,w}$,
\begin{align}\label{repro}
	\langle T_{\mu}f,g\rangle_{F^2_{\alpha,w}}
	&=\int_{\C}\int_{\C}f(\xi)\overline{B^w_z(\xi)}e^{-\alpha|\xi|^2}d\mu(\xi)\overline{g(z)}e^{-\alpha|z|^2}w(z)dA(z)\nonumber\\
	&=\int_{\C}f(\xi)\overline{\int_{\C}g(z)\overline{B^w_{\xi}(z)}e^{-\alpha|z|^2}w(z)dA(z)}e^{-\alpha|\xi|^2}d\mu(\xi)\nonumber\\
	&=\int_{\C}f(\xi)\overline{g(\xi)}e^{-\alpha|\xi|^2}d\mu(\xi)\nonumber\\
	&=\langle f,g\rangle_{L^2_{\alpha}(\mu)}.
\end{align}
The following lemma indicates that if the average function $\widehat{\mu}_{w,r}$ is bounded on $\C$ for some $r>0$, then $T_{\mu}$ is densely defined on $F^2_{\alpha,w}$, and \eqref{repro} holds on a dense subset of $F^2_{\alpha,w}$.

\begin{lemma}\label{well-defined}
Let $\alpha>0$, $w\in A^{\res}_{\infty}$, and let $\mu$ be a positive Borel measure on $\C$. Suppose that $\widehat{\mu}_{w,r}$ is bounded on $\C$ for some $r>0$. Then $T_{\mu}$ is well-defined on the set of polynomials, and for any polynomials $f$ and $g$,
$$\langle T_{\mu}f,g\rangle_{F^2_{\alpha,w}}=\langle f,g\rangle_{L^2_{\alpha}(\mu)}.$$
\end{lemma}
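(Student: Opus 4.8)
The plan is to deduce both assertions from a single Fubini--Tonelli argument, using the pointwise kernel bound \eqref{pu} to decouple the two variables $\xi$ and $z$. First I would record that every polynomial lies in $F^2_{\alpha,w}$: if $f$ is a polynomial then $|f(z)|^2e^{-\alpha|z|^2}\lesssim e^{-\frac{\alpha}{2}|z|^2}$, so $\|f\|^2_{F^2_{\alpha,w}}\lesssim\int_{\C}e^{-\frac{\alpha}{2}|z|^2}w\,dA<\infty$ by \eqref{int}. Thus the objects in the statement are meaningful once the relevant integrals are shown to converge.

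The heart of the matter is the following two finiteness estimates, valid for any polynomial $h$:
\[
\int_{\C}\frac{|h(\xi)|e^{-\frac{\alpha}{2}|\xi|^{2}}}{w(D(\xi,r))^{1/2}}\,d\mu(\xi)<\infty
\qquad\text{and}\qquad
\int_{\C}\frac{|h(z)|e^{-\frac{\alpha}{2}|z|^{2}}}{w(D(z,r))^{1/2}}\,w(z)\,dA(z)<\infty.
\]
I would prove both by tiling $\C$ with the lattice squares $\{Q_r(\nu):\nu\in r\Z^2\}$. On each $Q_r(\nu)\subset D(\nu,r)$ one has $w(D(\xi,r))\asymp w(D(\nu,r))$ by \eqref{sd}, while $\mu(Q_r(\nu))\le\mu(D(\nu,r))\lesssim w(D(\nu,r))$ precisely because $\widehat{\mu}_{w,r}$ is bounded, and $\int_{Q_r(\nu)}w\,dA\le w(D(\nu,r))$. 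Hence the $\nu$-th piece of either integral is $\lesssim w(D(\nu,r))^{1/2}\sup_{Q_r(\nu)}\bigl(|h|\,e^{-\frac{\alpha}{2}|\cdot|^2}\bigr)$. Bounding $w(D(\nu,r))^{1/2}\lesssim C^{|\nu|/2}w(D(0,r))^{1/2}$ by \eqref{dd}, and $\sup_{Q_r(\nu)}\bigl(|h|\,e^{-\frac{\alpha}{2}|\cdot|^2}\bigr)\lesssim P(|\nu|)\,e^{-\frac{\alpha}{2}|\nu|^2}e^{\alpha r|\nu|}$ for a polynomial majorant $P$, each summand is $e^{-\frac{\alpha}{2}|\nu|^2}$ times factors of at most exponential growth in $|\nu|$, so the Gaussian forces the lattice sum to converge. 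This is exactly where the boundedness of $\widehat{\mu}_{w,r}$ enters: it converts $\mu$-mass into $w$-mass so that the $d\mu$-integral is summed just like the $dA$-integral.

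With these in hand, well-definedness is immediate: for fixed $z$, \eqref{pu} gives $|T_\mu f(z)|\le\int_{\C}|f(\xi)||B^w_z(\xi)|e^{-\alpha|\xi|^2}d\mu(\xi)\lesssim\frac{e^{\frac{\alpha}{2}|z|^2}}{w(D(z,r))^{1/2}}\int_{\C}\frac{|f(\xi)|e^{-\frac{\alpha}{2}|\xi|^2}}{w(D(\xi,r))^{1/2}}d\mu(\xi)<\infty$; since $\overline{B^w_z(\xi)}=B^w_\xi(z)$ is entire in $z$ with this bound locally uniform, $T_\mu f$ is a well-defined entire function. For the identity I would apply \eqref{pu} to the full double integral $\int_{\C}\int_{\C}|f(\xi)||B^w_z(\xi)|e^{-\alpha|\xi|^2}|g(z)|e^{-\alpha|z|^2}w(z)\,d\mu(\xi)\,dA(z)$; the bound factors as the product of the two integrals above with $h=f$ and $h=g$, so it is finite and Fubini applies. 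Integrating in $\xi$ first gives $\langle T_\mu f,g\rangle_{F^2_{\alpha,w}}$, while integrating in $z$ first and invoking the Hermitian symmetry $\overline{B^w_z(\xi)}=B^w_\xi(z)$ together with the reproducing formula $\langle g,B^w_\xi\rangle_{F^2_{\alpha,w}}=g(\xi)$ gives $\langle f,g\rangle_{L^2_{\alpha}(\mu)}$, yielding the claimed equality.

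The step I expect to be the main obstacle is the Tonelli verification, namely the two lattice estimates: one must organise the decomposition so that the boundedness of $\widehat{\mu}_{w,r}$ is exploited cleanly, and then confirm that the Gaussian decay genuinely dominates the combined exponential growth $C^{|\nu|/2}e^{\alpha r|\nu|}$ coming from \eqref{dd} and the crude pointwise bound, together with the polynomial factor. Everything afterwards---the kernel symmetry $\overline{B^w_z(\xi)}=B^w_\xi(z)$, differentiation under the integral sign to see that $T_\mu f$ is entire, and the two iterated computations---is routine.
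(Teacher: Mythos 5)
Your proposal is correct; the overall skeleton (absolute convergence of the double integral, then Fubini plus the reproducing formula and the Hermitian symmetry of the kernel) is the same as the paper's, but the way you verify convergence is genuinely different. The paper never estimates the $d\mu$-integral of a non-analytic integrand directly: it applies Lemma \ref{pointwise} with $p=1$ to the entire function $f\cdot B^w_z$, so that $|f(u)||B^w_z(u)|e^{-\alpha|u|^2}\lesssim w(D(u,r))^{-1}\int_{D(u,r)}|f||B^w_z|e^{-\alpha|\xi|^2}w\,dA(\xi)$, and then a single application of Fubini converts $d\mu(u)$ into $\widehat{\mu}_{w,r}(\xi)w(\xi)\,dA(\xi)$; only after discarding the bounded factor $\widehat{\mu}_{w,r}$ does it invoke \eqref{pu}, \eqref{dd} and \eqref{int} to finish, exactly as in your second displayed integral. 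You instead apply \eqref{pu} first and then control the resulting $d\mu$-integral by tiling with the lattice squares $Q_r(\nu)$, using $\mu(Q_r(\nu))\lesssim w(D(\nu,r))$ cell by cell; your bookkeeping ($w(D(\nu,r))^{1/2}\lesssim C^{|\nu|/2}$, the $e^{\alpha r|\nu|}$ correction from replacing $|\xi|$ by $|\nu|$, and the Gaussian absorbing everything) is sound. Both arguments use the boundedness of $\widehat{\mu}_{w,r}$ in the same role, namely to convert $\mu$-mass into $w$-mass; the paper's submean-value/Fubini device is slicker and reused elsewhere in the paper (e.g.\ in the proofs of Theorems \ref{ess-norm} and \ref{schatten-h}), while your lattice argument is more elementary and makes the role of the Gaussian decay completely explicit. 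One small remark: your parenthetical claim that $T_\mu f$ is entire is more than the lemma asks for, but it is harmless and correctly justified by the locally uniform bound.
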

\begin{proof}
Let $f$ be a polynomial. Then for any $z\in\C$, combining Lemma \ref{pointwise}, the inequalities \eqref{sd} and \eqref{pu} with Fubini's theorem gives that
\begin{align*}
&\int_{\C}|f(u)||B^w_z(u)|e^{-\alpha|u|^2}d\mu(u)\\
&\ \lesssim\int_{\C}\frac{1}{w(D(u,r))}\int_{D(u,r)}|f(\xi)||B^w_z(\xi)|e^{-\alpha|\xi|^2}w(\xi)dA(\xi)d\mu(u)\\
&\ \asymp\int_{\C}|f(\xi)||B^w_z(\xi)|e^{-\alpha|\xi|^2}w(\xi)\widehat{\mu}_{w,r}(\xi)dA(\xi)\\
&\ \lesssim\int_{\C}|f(\xi)||B^w_z(\xi)|e^{-\alpha|\xi|^2}w(\xi)dA(\xi)\\
&\ \lesssim\frac{e^{\frac{\alpha}{2}|z|^2}}{w(D(z,r))^{1/2}}\int_{\C}|f(\xi)|e^{-\frac{\alpha}{2}|\xi|^2}
    \frac{w(\xi)}{w(D(\xi,r))^{1/2}}dA(\xi).
\end{align*}
By \eqref{dd}, there exists $C\geq1$ such that for any $\xi\in\C$,
$$w(D(\xi,r))\gtrsim C^{-|\xi|}w(D(0,r)).$$
Consequently,
\begin{align*}
\int_{\C}|f(u)||B^w_z(u)|e^{-\alpha|u|^2}d\mu(u)
&\lesssim\frac{e^{\frac{\alpha}{2}|z|^2}}{w(D(z,r))^{1/2}}
    \int_{\C}|f(\xi)|C^{\frac{1}{2}|\xi|}e^{-\frac{\alpha}{2}|\xi|^2}w(\xi)dA(\xi)\\
&\lesssim\frac{e^{\frac{\alpha}{2}|z|^2}}{w(D(z,r))^{1/2}}\int_{\C}e^{-\frac{\alpha}{4}|\xi|^2}w(\xi)dA(\xi)\\
&\lesssim\frac{e^{\frac{\alpha}{2}|z|^2}}{w(D(z,r))^{1/2}}<\infty,
\end{align*}
where we have used \eqref{int} and the fact that $f$ is a polynomial. Hence $T_{\mu}f$ is well-defined. Similarly, for any polynomial $g$,
\begin{align*}
&\int_{\C}\int_{\C}|f(u)||B^w_z(u)|e^{-\alpha|u|^2}d\mu(u)|g(z)|e^{-\alpha|z|^2}w(z)dA(z)\\
&\ \lesssim\int_{\C}|g(z)|e^{-\frac{\alpha}{2}|z|^2}\frac{w(z)}{w(D(z,r))^{1/2}}dA(z)\\
&\ \lesssim\int_{\C}e^{-\frac{\alpha}{4}|z|^2}w(z)dA(z)<\infty.
\end{align*}
Therefore, as in \eqref{repro}, we can apply Fubini's theorem and the reproducing formula to obtain that
$$\langle T_{\mu}f,g\rangle_{F^2_{\alpha,w}}=\langle f,g\rangle_{L^2_{\alpha}(\mu)}.$$
The proof is complete.
\end{proof}

We are now ready to prove Theorems \ref{bdd} and \ref{ess-norm}.

\begin{proof}[Proof of Theorem \ref{bdd}]
Let $\delta\in(0,1)$ be the constant from Theorem \ref{pointwise-below}. The implication (a)$\Longrightarrow$(b) is clear since for any $z\in\C$, \eqref{repro} gives that
\begin{equation}\label{tildeup}
\widetilde{\mu}(z)=\|b^w_z\|^2_{L^2_{\alpha}(\mu)}=\langle T_{\mu}b^w_z,b^w_z\rangle_{F^2_{\alpha,w}}\leq\|T_{\mu}b^w_z\|_{F^2_{\alpha,w}}
\leq\|T_{\mu}\|_{F^2_{\alpha,w}\to F^2_{\alpha,w}}.
\end{equation}

Suppose now that (b) holds. Then for any $r\in(0,\delta)$, Theorem \ref{pointwise-below} together with Lemma \ref{norm} yields that
\begin{equation}\label{tilde>hat}
\widetilde{\mu}(z)\geq\int_{D(z,r)}\frac{|B^w_z(u)|^2}{\|B^w_z\|^2_{F^2_{\alpha,w}}}e^{-\alpha|u|^2}d\mu(u)
\gtrsim\frac{\mu(D(z,r))}{w(D(z,r))}=\widehat{\mu}_{w,r}(z).
\end{equation}
Hence (c) holds and $\sup_{\C}\widehat{\mu}_{w,r}\lesssim\sup_{\C}\widetilde{\mu}$.

Suppose next that (c) holds, i.e. $\widehat{\mu}_{w,r}$ is bounded on $\C$ for some $r\in(0,\delta)$. Then by \cite[Theorem 1.2]{CFP}, the embedding $I_d:F^2_{\alpha,w}\to L^2_{\alpha}(\mu)$ is bounded, and
$$\|I_{d}\|_{F^2_{\alpha,w}\to L^2_{\alpha}(\mu)}\asymp\left(\sup_{z\in\C}\widehat{\mu}_{w,r}(z)\right)^{\frac{1}{2}}.$$
Therefore, for any two polynomials $f$ and $g$, Lemma \ref{well-defined} together with the Cauchy--Schwarz inequality yields that
$$|\langle T_{\mu}f,g\rangle_{F^2_{\alpha,w}}|=|\langle f,g\rangle_{L^2_{\alpha}(\mu)}|
\leq\|f\|_{L^2_{\alpha}(\mu)}\|g\|_{L^2_{\alpha}(\mu)}
\lesssim\sup_{z\in\C}\widehat{\mu}_{w,r}(z)\cdot\|f\|_{F^2_{\alpha,w}}\|g\|_{F^2_{\alpha,w}}.$$
Since polynomials are dense in $F^2_{\alpha,w}$ by Lemma \ref{dense}, we conclude that $T_{\mu}$ is bounded on $F^2_{\alpha,w}$, and $\|T_{\mu}\|_{F^2_{\alpha,w}\to F^2_{\alpha,w}}\lesssim\sup_{z\in\C}\widehat{\mu}_{w,r}(z)$. Hence (a) holds and the proof is finished.
\end{proof}

\begin{proof}[Proof of Theorem \ref{ess-norm}]
Let $\delta\in(0,1)$ be the constant from Theorem \ref{pointwise-below}, and fix $r\in(0,\delta)$. By \eqref{tilde>hat}, we have
$$\limsup_{|z|\to\infty}\widehat{\mu}_{w,r}(z)\lesssim\limsup_{|z|\to\infty}\widetilde{\mu}(z).$$
Therefore, it is sufficient to show
\begin{equation}\label{suff}
\limsup_{|z|\to\infty}\widetilde{\mu}(z)\lesssim\|T_{\mu}\|_{e,F^2_{\alpha,w}\to F^2_{\alpha,w}}\lesssim
\limsup_{|z|\to\infty}\widehat{\mu}_{w,r}(z).
\end{equation}
We start with the first estimate. Let $K$ be a compact operator on $F^2_{\alpha,w}$. Since Proposition \ref{weakly0} says that the normalized reproducing kernel $b^w_z$ converges to $0$ weakly as $|z|\to\infty$, we have $Kb^w_z\to0$ in $F^2_{\alpha,w}$ as $|z|\to\infty$. Therefore, we deduce from \eqref{tildeup} that
\begin{align*}
\|T_{\mu}-K\|_{F^2_{\alpha,w}\to F^2_{\alpha,w}}
&\geq\limsup_{|z|\to\infty}\|(T_{\mu}-K)b^w_z\|_{F^2_{\alpha,w}}\\
&\geq\limsup_{|z|\to\infty}\left(\|T_{\mu}b^w_z\|_{F^2_{\alpha,w}}-\|Kb^w_z\|_{F^2_{\alpha,w}}\right)\\
&=\limsup_{|z|\to\infty}\|T_{\mu}b^w_z\|_{F^2_{\alpha,w}}\\
&\geq\limsup_{|z|\to\infty}\widetilde{\mu}(z).
\end{align*}
Since $K\in\mathcal{K}(F^2_{\alpha,w})$ is arbitrary, we obtain that
$$\|T_{\mu}\|_{e,F^2_{\alpha,w}\to F^2_{\alpha,w}}\geq\limsup_{|z|\to\infty}\widetilde{\mu}(z).$$

We next concentrate on the second estimate of \eqref{suff}. Assume that $\{f_j\}$ is an orthonormal basis of $F^2_{\alpha,w}$. For each positive integer $n$, let the operator $Q_n$ be defined by
$$Q_nf:=\sum_{j=1}^n\langle f,f_j\rangle_{F^2_{\alpha,w}}f_j,\quad f\in F^2_{\alpha,w}.$$
Then $Q_n$ is compact on $F^2_{\alpha,w}$. Writing $R_n=I-Q_n$, we have
$$R_nT_{\mu}R_n=T_{\mu}-T_{\mu}Q_n-Q_nT_{\mu}+Q_nT_{\mu}Q_n.$$
Consequently, for each positive integer $n$,
$$\|T_{\mu}\|_{e,F^2_{\alpha,w}\to F^2_{\alpha,w}}\leq\|R_nT_{\mu}R_n\|_{F^2_{\alpha,w}\to F^2_{\alpha,w}}.$$
We now claim that for any $t>r$,
\begin{equation}\label{claim}
\limsup_{n\to\infty}\sup_{\|f\|_{F^2_{\alpha,w}}=1}\|R_nf\|^2_{L^2_{\alpha}(\mu)}
\lesssim\sup_{z\in\C\setminus D(0,t-r)}\widehat{\mu}_{w,r}(z).
\end{equation}
Then, noting that $R_n$ is self-adjoint, we apply \eqref{repro}, the Cauchy--Schwarz inequality and \eqref{claim} to deduce that
\begin{align*}
\|T_{\mu}\|_{e,F^2_{\alpha,w}\to F^2_{\alpha,w}}
&\leq\limsup_{n\to\infty}\|R_nT_{\mu}R_n\|_{F^2_{\alpha,w}\to F^2_{\alpha,w}}\\
&=\limsup_{n\to\infty}\sup_{\|f\|_{F^2_{\alpha,w}}=\|g\|_{F^2_{\alpha,w}}=1}|\langle R_nT_{\mu}R_nf,g\rangle_{F^2_{\alpha,w}}|\\
&=\limsup_{n\to\infty}\sup_{\|f\|_{F^2_{\alpha,w}}=\|g\|_{F^2_{\alpha,w}}=1}|\langle T_{\mu}R_nf,R_ng\rangle_{F^2_{\alpha,w}}|\\
&=\limsup_{n\to\infty}\sup_{\|f\|_{F^2_{\alpha,w}}=\|g\|_{F^2_{\alpha,w}}=1}|\langle R_nf,R_ng\rangle_{L^2_{\alpha}(\mu)}|\\
&\leq\limsup_{n\to\infty}\sup_{\|f\|_{F^2_{\alpha,w}}=1}\|R_nf\|^2_{L^2_{\alpha}(\mu)}\\
&\lesssim\sup_{z\in\C\setminus D(0,t-r)}\widehat{\mu}_{w,r}(z).
\end{align*}
Letting $t\to\infty$, we obtain that
$$\|T_{\mu}\|_{e,F^2_{\alpha,w}\to F^2_{\alpha,w}}\lesssim\limsup_{|z|\to\infty}\widehat{\mu}_{w,r}(z).$$
It remains to establish \eqref{claim}. Fix $t>r$. For any $f\in F^2_{\alpha,w}$ with $\|f\|_{F^2_{\alpha,w}}=1$ and $z\in\C$,
$$|R_nf(z)|^2=|\langle R_nf,B^w_z\rangle_{F^2_{\alpha,w}}|^2=|\langle f,R_nB^w_z\rangle_{F^2_{\alpha,w}}|^2
\leq\|R_nB^w_z\|^2_{F^2_{\alpha,w}},$$
which implies that
$$\sup_{\|f\|_{F^2_{\alpha,w}}=1}\int_{D(0,t)}|R_nf(z)|^2e^{-\alpha|z|^2}d\mu(z)
\leq\int_{D(0,t)}\|R_nB^w_z\|^2_{F^2_{\alpha,w}}e^{-\alpha|z|^2}d\mu(z).$$
Note that the boundedness of $T_{\mu}$ on $F^2_{\alpha,w}$ implies that $\mu(D(0,t))<\infty$. Since for any $z\in\C$,
$$\lim_{n\to\infty}\|R_nB^w_z\|_{F^2_{\alpha,w}}=0,$$
and by Lemma \ref{norm},
$$\|R_nB^w_z\|^2_{F^2_{\alpha,w}}e^{-\alpha|z|^2}\leq\|B^w_z\|^2_{F^2_{\alpha,w}}e^{-\alpha|z|^2}\lesssim w(D(z,1))^{-1},$$
which is bounded on $D(0,t)$, we may apply the dominated convergence theorem to deduce that
$$\lim_{n\to\infty}\int_{D(0,t)}\|R_nB^w_z\|^2_{F^2_{\alpha,w}}e^{-\alpha|z|^2}d\mu(z)=0.$$
Therefore,
\begin{equation}\label{in-disk}
\lim_{n\to\infty}\sup_{\|f\|_{F^2_{\alpha,w}}=1}\int_{D(0,t)}|R_nf(z)|^2e^{-\alpha|z|^2}d\mu(z)=0.
\end{equation}
On the other hand, by Lemma \ref{pointwise}, the inequality \eqref{sd} and Fubini's theorem, we have for any $f\in F^2_{\alpha,w}$ with $\|f\|_{F^2_{\alpha,w}}=1$,
\begin{align*}
&\int_{\C\setminus D(0,t)}|R_nf(z)|^2e^{-\alpha|z|^2}d\mu(z)\\
&\ \lesssim\int_{\C\setminus D(0,t)}\frac{1}{w(D(z,r))}\int_{D(z,r)}|R_nf(\xi)|^2e^{-\alpha|\xi|^2}w(\xi)dA(\xi)d\mu(z)\\
&\ =\int_{\C}|R_nf(\xi)|^2e^{-\alpha|\xi|^2}w(\xi)\int_{D(\xi,r)\cap(\C\setminus D(0,t))}\frac{d\mu(z)}{w(D(z,r))}dA(\xi)\\
&\ \lesssim\int_{\C\setminus D(0,t-r)}|R_nf(\xi)|^2e^{-\alpha|\xi|^2}w(\xi)\widehat{\mu}_{w,r}(\xi)dA(\xi)\\
&\ \leq\sup_{z\in \C\setminus D(0,t-r)}\widehat{\mu}_{w,r}(z)\cdot\int_{\C\setminus D(0,t-r)}|R_nf(\xi)|^2e^{-\alpha|\xi|^2}w(\xi)dA(\xi)\\
&\ \leq\sup_{z\in \C\setminus D(0,t-r)}\widehat{\mu}_{w,r}(z).
\end{align*}
This, together with \eqref{in-disk}, establishes \eqref{claim} and finishes the proof.
\end{proof}

\section{Schatten $p$-class Toeplitz operators}\label{sp}


The purpose of this section is to prove Theorem \ref{schatten-p}. We will actually establish a more general result that contains Theorem \ref{schatten-p} as a special case.

Let $T$ be a compact operator on a separable Hilbert space $H$ with singular value sequence $\{s_n(T)\}$, and let $h:\R^+\to\R^+$ be a continuous increasing function such that $h(0)=0$. Following \cite{EMMN}, we say that $T\in \s_h(H)$ if there exists $C>0$ such that
$$\sum_{n\geq1}h(Cs_n(T))<\infty.$$
The following theorem characterizes the $\s_h$-class Toeplitz operators on $F^2_{\alpha,w}$ for convex functions $h$, which reduces to Theorem \ref{schatten-p} when $h(t)=t^p$ for $p\geq1$.

\begin{theorem}\label{schatten-h}
Let $\alpha>0$, $w\in A^{\res}_{\infty}$, and let $\mu$ be a positive Borel measure on $\C$. Suppose that $h:\R^+\to \R^+$ is a continuous increasing convex function such that $h(0)=0$, and let $\delta\in(0,1)$ be the constant from Theorem \ref{pointwise-below}. Then the following conditions are equivalent:
\begin{enumerate}
	\item [(a)] $T_{\mu}\in\s_h(F^2_{\alpha,w})$;
	\item [(b)] there exists $C>0$ such that
	$$\int_{\C}h\left(C\widetilde{\mu}(z)\right)dA(z)<\infty;$$
	\item [(c)] there exists $C>0$ such that for some (or any) $r\in (0,\delta)$,
	$$\int_{\C}h\left(C\widehat{\mu}_{w,r}(z)\right)dA(z)<\infty.$$
\end{enumerate}
Moreover, there exist $C_1,C_2,C_3>0$ such that
$$\sum_{n\geq1}h\left(C_1s_n(T_{\mu})\right)\asymp\int_{\C}h\left(C_2\widetilde{\mu}(z)\right)dA(z)
\asymp\int_{\C}h\left(C_3\widehat{\mu}_{w,r}(z)\right)dA(z).$$
\end{theorem}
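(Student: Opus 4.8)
The plan is to reduce every condition to a single discrete quantity attached to a fixed lattice and then prove the operator-theoretic equivalence against that quantity. Fix $r\in(0,\delta)$ and a lattice $\{a_k\}\subset\C$ (a suitable dilate of $\Z^2$) so that the disks $D(a_k,r)$ cover $\C$ with uniformly bounded overlap and the squares $Q_k:=Q_r(a_k)$ tile $\C$. The first routine step is to record three facts that require only the doubling estimates \eqref{ss}--\eqref{sd} and the subharmonicity bound of Lemma \ref{pointwise}: (i) a sampling/synthesis inequality $\sum_k|f(a_k)|^2e^{-\alpha|a_k|^2}w(D(a_k,r))\lesssim\|f\|^2_{F^2_{\alpha,w}}$, equivalently boundedness of the synthesis operator $A:\ell^2\to F^2_{\alpha,w}$, $A(c):=\sum_kc_kb^w_{a_k}$; (ii) the localized trace identity $\mathrm{tr}(T_{\mu|_{Q_k}})\asymp\widehat{\mu}_{w,r}(a_k)$, coming from $\sum_j|f_j(\xi)|^2=B^w_\xi(\xi)\asymp e^{\alpha|\xi|^2}/w(D(\xi,r))$ and Lemma \ref{norm}; and (iii) the lattice--integral comparison $\int_\C h(c\,\widehat{\mu}_{w,r})\,dA\asymp\sum_kh(c'\,\widehat{\mu}_{w,r}(a_k))$, together with the independence of (c) of $r\in(0,\delta)$, both via covering arguments, \eqref{sd} and the convexity of $h$. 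It then suffices to prove $\sum_nh(c\,s_n(T_\mu))\asymp\sum_kh(c'\,\widehat{\mu}_{w,r}(a_k))$ and to compare the two averaged objects $\widehat{\mu}_{w,r}$ and $\widetilde\mu$.

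For the necessity direction (a)$\Rightarrow$(c), which I expect to be \emph{decay-free}, I would test against the normalized kernels. Since $\widetilde\mu(a_k)=\langle T_\mu b^w_{a_k},b^w_{a_k}\rangle=\langle A^*T_\mu A\,e_k,e_k\rangle$, the diagonal of the positive operator $A^*T_\mu A$ in the standard basis of $\ell^2$ reproduces $\widetilde\mu(a_k)$. Because $AA^*\le\|A\|^2 I$, Weyl's monotonicity gives $s_n(A^*T_\mu A)\lesssim s_n(T_\mu)$, so $A^*T_\mu A\in\s_h$; and for a positive operator the diagonal is majorized by the eigenvalue sequence, whence $\sum_kh(c\,\widetilde\mu(a_k))\le\sum_nh(c\,s_n(A^*T_\mu A))\lesssim\sum_nh(C's_n(T_\mu))$ by convexity of $h$. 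Invoking $\widehat{\mu}_{w,r}\lesssim\widetilde\mu$ from \eqref{tilde>hat} and comparison (iii) then yields $\int_\C h(c\,\widehat{\mu}_{w,r})\,dA\lesssim\sum_nh(C's_n(T_\mu))$.

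For the sufficiency direction (c)$\Rightarrow$(a) I would decompose $\mu=\sum_k\mu_k$ with $\mu_k=\mu|_{Q_k}$, so $T_\mu=\sum_kT_{\mu_k}$ with each $T_{\mu_k}\ge0$ and $\mathrm{tr}(T_{\mu_k})\asymp\widehat{\mu}_{w,r}(a_k)$ by (ii). Color the lattice into finitely many $\rho$-separated subfamilies; within one color the ranges of the $T_{\mu_k}$ are \emph{almost orthogonal}, so a singular-value perturbation argument gives $\sum_nh(c\,s_n(\sum_{k}T_{\mu_k}))\lesssim\sum_k\sum_nh(c's_n(T_{\mu_k}))$, and since $h$ is convex with $h(0)=0$ it is superadditive, whence $\sum_nh(c's_n(T_{\mu_k}))\le h(c'\,\mathrm{tr}\,T_{\mu_k})\lesssim h(C\widehat{\mu}_{w,r}(a_k))$. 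Summing over the finitely many colors and applying (iii) gives $\sum_nh(c\,s_n(T_\mu))\lesssim\int_\C h(C\widehat{\mu}_{w,r})\,dA<\infty$, i.e. $T_\mu\in\s_h$. Finally, for (b)$\Leftrightarrow$(c) the bound $\lesssim$ is immediate from \eqref{tilde>hat}, while for $\gtrsim$ I would prove the pointwise averaging bound $\widetilde\mu(z)\lesssim\sum_ke^{-\eta|z-a_k|}\widehat{\mu}_{w,r}(a_k)$ by splitting the defining integral over the cells $Q_k$, then push $h$ inside by Jensen's inequality and integrate, using that $z\mapsto\sum_ke^{-\eta|z-a_k|}$ has total mass $\asymp1$.

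The main obstacle is the \emph{off-diagonal decay of the reproducing kernel}: the estimate \eqref{pu} available to us carries no decay in $|z-\xi|$, yet both the almost-orthogonality used in the coloring step of (c)$\Rightarrow$(a) and the averaging bound for $\widetilde\mu$ in (b)$\Leftrightarrow$(c) require a refinement of the shape $|b^w_z(\xi)|^2e^{-\alpha|\xi|^2}\lesssim w(D(\xi,r))^{-1}e^{-\eta|z-\xi|}$. Establishing such decay (adapting the extremal/contour techniques behind Theorem \ref{pointwise-below}, or the embedding theory of \cite{CFP}) is the crux. A secondary difficulty is the careful bookkeeping of absolute constants through the convexity, superadditivity and coloring steps so as to produce the stated comparison with explicit $C_1,C_2,C_3$, since $\s_h$ is not a normed ideal and no triangle inequality is available for general convex $h$.
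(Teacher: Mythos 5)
Your proposal hinges on an estimate that you correctly identify as ``the crux'' but never establish: the off-diagonal decay $|b^w_z(\xi)|^2e^{-\alpha|\xi|^2}\lesssim w(D(\xi,r))^{-1}e^{-\eta|z-\xi|}$. Without it, the boundedness of the Gram matrix underlying your almost-orthogonality step, the singular-value perturbation argument in (c)$\Rightarrow$(a), and the averaging bound $\widetilde{\mu}(z)\lesssim\sum_ke^{-\eta|z-a_k|}\widehat{\mu}_{w,r}(a_k)$ in (b)$\Leftrightarrow$(c) all collapse, since \eqref{pu} carries no decay whatsoever. This is not a bookkeeping issue but a missing theorem: for general $w\in A^{\res}_{\infty}$ such decay is a substantial result in its own right, and the present paper is structured precisely so as never to need it. A second, independent problem sits in your coloring step: for convex $h$ with $h(0)=0$ one has \emph{super}additivity, i.e. $\sum_nh(s_n(A+B))\geq\sum_nh(s_n(A))+\sum_nh(s_n(B))$ for positive $A,B$ in the orthogonal case, so the inequality $\sum_nh(c\,s_n(\sum_kT_{\mu_k}))\lesssim\sum_k\sum_nh(c's_n(T_{\mu_k}))$ that you need points in the hard direction; since $\s_h$ is not a normed ideal, rescuing it requires a genuine majorization argument on top of the (unproven) almost-orthogonality. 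As written, the sufficiency direction does not go through.

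For contrast, the paper's proof avoids the lattice entirely and needs no kernel decay. Writing the canonical decomposition $T_{\mu}f=\sum_ns_n\langle f,f_n\rangle f_n$, it proves (a)$\Rightarrow$(b) by applying Jensen's inequality to $h\bigl(C\sum_ns_n|\langle b^w_z,f_n\rangle|^2\bigr)$ (the coefficients sum to at most $1$) and then integrating in $z$, using $\int_{\C}|f_n(z)|^2\|B^w_z\|^{-2}_{F^2_{\alpha,w}}dA(z)\asymp1$ from Lemmas \ref{norm} and \ref{hat-norm}; (b)$\Rightarrow$(c) is \eqref{tilde>hat}; and (c)$\Rightarrow$(a) applies Jensen a second time to $s_n=\|f_n\|^2_{L^2_{\alpha}(\mu)}\lesssim\int|f_n|^2e^{-\alpha|z|^2}\widehat{w}_r(z)\widehat{\mu}_{w,r}(z)dA(z)$ against the probability-like measure $|f_n(z)|^2e^{-\alpha|z|^2}\widehat{w}_r(z)dA(z)$, then sums over $n$ via $\sum_n|f_n(z)|^2\leq\|B^w_z\|^2_{F^2_{\alpha,w}}$. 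Everything you try to extract from the lattice (trace localization, sampling, orthogonality) is replaced by these two dual Jensen estimates, which only use Lemma \ref{pointwise}, Lemma \ref{norm} and Theorem \ref{pointwise-below}. Your (a)$\Rightarrow$(c) direction via the diagonal of $A^*T_{\mu}A$ is sound (the sampling inequality (i) does follow from Lemma \ref{pointwise} and bounded overlap), but the remaining two-thirds of the argument rests on unproven and, in one place, directionally wrong claims.
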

\begin{proof}
By Corollary \ref{cpt}, it is clear that if one of (a), (b) and (c) holds, then $T_{\mu}$ is compact on $F^2_{\alpha,w}$. Since $T_{\mu}$ is self-adjoint, we may assume its canonical decomposition is given by
\begin{equation}\label{cd}
T_{\mu}f=\sum_{n\geq1}s_n\langle f,f_n\rangle_{F^2_{\alpha,w}}f_n,\quad f\in F^2_{\alpha,w},
\end{equation}
where $\{f_n\}$ is an orthonormal set of $F^2_{\alpha,w}$.

Suppose first that $(a)$ holds. That is,
$$\sum_{n\geq1}h(Cs_n)<\infty$$
for some $C>0$. Noting that for any $z\in\C$, $\sum_{n\geq1}\left|\langle b^w_z,f_n\rangle_{F^2_{\alpha,w}}\right|^2\leq1$, by the definition of $\widetilde{\mu}$, \eqref{repro}, \eqref{cd} and Jensen’s inequality, we have
\begin{align*}
h\left(C\widetilde{\mu}(z)\right)
&=h\left(C\langle T_{\mu}b^w_z,b^w_z\rangle_{F^2_{\alpha,w}}\right)\\
&=h\left(C\sum_{n\geq1}s_n\left|\langle b^w_z,f_n\rangle_{F^2_{\alpha,w}}\right|^2\right)\\
&\leq\sum_{n\geq1}h\left(Cs_n\right)\left|\langle b^w_z,f_n\rangle_{F^2_{\alpha,w}}\right|^2\\
&=\sum_{n\geq1}h\left(Cs_n\right)|f_n(z)|^2\|B^w_z\|^{-2}_{F^2_{\alpha,w}}.
\end{align*}
Therefore, applying Lemmas \ref{norm} and \ref{hat-norm}, we establish that
\begin{align*}
\int_{\C}h\left(C\widetilde{\mu}(z)\right)dA(z)
&\leq\sum_{n\geq1}h\left(Cs_n\right)\int_{\C}|f_n(z)|^2\|B^w_z\|^{-2}_{F^2_{\alpha,w}}dA(z)\\
&\asymp\sum_{n\geq1}h\left(Cs_n\right)\int_{\C}|f_n(z)|^2e^{-\alpha|z|^2}w(D(z,1))dA(z)\\
&\asymp\sum_{n\geq1}h\left(Cs_n\right)<\infty,
\end{align*}
and consequently, (b) holds.

The implication (b)$\Longrightarrow$(c) follows from \eqref{tilde>hat}.

Suppose next that (c) holds. That is, there exists $r\in(0,\delta)$ and $C'>0$ such that
$$\int_{\C}h\left(C'\widehat{\mu}_{w,r}(z)\right)dA(z)<\infty.$$
By \eqref{cd} and \eqref{repro}, we have for any $n\geq1$,
$$s_n=\langle T_{\mu}f_n,f_n\rangle_{F^2_{\alpha,w}}=\|f\|_{L^2_{\alpha}(\mu)}^2,$$
which, together with Lemma \ref{pointwise}, the inequality \eqref{sd} and Fubini's theorem, implies that for any $c>0$,
\begin{align*}
h(cs_n)
&=h\left(c\int_{\C}|f_n(\xi)|^2e^{-\alpha|\xi|^2}d\mu(\xi)\right)\\
&\leq h\left(c_1\int_{\C}\frac{1}{w(D(\xi,r))}\int_{D(\xi,r)}|f_n(z)|^2e^{-\alpha|z|^2}\widehat{w}_r(z)dA(z)d\mu(\xi)\right)\\
&\leq h\left(c_2\int_{\C}|f_n(z)|^2e^{-\alpha|z|^2}\widehat{w}_r(z)\widehat{\mu}_{w,r}(z)dA(z)\right).
\end{align*}
By Lemma \ref{hat-norm},
$$\int_{\C}|f_n(z)|^2e^{-\alpha|z|^2}\widehat{w}_r(z)dA(z)\asymp1.$$
Hence we can choose some $c>0$ and use Jensen's inequality to obtain that
$$h(cs_n)\leq\int_{\C}h\left(C'\widehat{\mu}_{w,r}(z)\right)|f_n(z)|^2e^{-\alpha|z|^2}\widehat{w}_r(z)dA(z),$$
which, combined with Lemma \ref{norm}, implies that
\begin{align*}
\sum_{n\geq1}h\left(cs_n\right)
&\leq\sum_{n\geq1}\int_{\C}h\left(C'\widehat{\mu}_{w,r}(z)\right)|f_n(z)|^2e^{-\alpha|z|^2}\widehat{w}_r(z)dA(z)\\
&=\int_{\C}h\left(C'\widehat{\mu}_{w,r}(z)\right)\left(\sum_{n\geq1}|\langle B^w_z,f_n\rangle_{F^2_{\alpha,w}}|^2\right)
    e^{-\alpha|z|^2}\widehat{w}_r(z)dA(z)\\
&\leq\int_{\C}h\left(C'\widehat{\mu}_{w,r}(z)\right)\|B^w_z\|^2_{F^2_{\alpha,w}}e^{-\alpha|z|^2}\widehat{w}_r(z)dA(z)\\
&\asymp\int_{\C}h\left(C'\widehat{\mu}_{w,r}(z)\right)dA(z)<\infty.
\end{align*}
This establishes (a) and finishes the proof.
\end{proof}

As an application of Theorem \ref{schatten-h}, we can estimate the decay of singular values of compact Toeplitz operators $T_{\mu}$ on $F^2_{\alpha,w}$. Let $\{s_n(T_{\mu})\}$ be the singular value sequence of $T_{\mu}$. The following corollary follows from Theorem \ref{schatten-h} and \cite[Lemma 6.1]{EMMN} directly.

\begin{corollary}\label{decay}
Let $\alpha>0$, $w\in A^{\res}_{\infty}$, and let $\mu$ be a positive Borel measure on $\C$ such that $T_{\mu}$ is compact on $F^2_{\alpha,w}$. Let $\delta\in(0,1)$ be the constant from Theorem \ref{pointwise-below}. Suppose that $\eta:[1,+\infty)\to (0,+\infty)$ is a continuous decreasing function such that $\eta(+\infty)=0$ and
$$\eta(t\log t)\asymp\eta(t),\quad t\to+\infty.$$
If, in addition, the function $h_{\eta}:\R^+\to\R^+$ defined by $h_{\eta}(\eta(t))=1/t$ is convex, then the following conditions are equivalent:
\begin{enumerate}
	\item [(a)] $s_n(T_{\mu})\lesssim \eta(n)$;
	\item [(b)] there exists $C>0$ such that
	$$\int_{\C}h_{\eta}\left(C\widetilde{\mu}(z)\right)dA(z)<\infty;$$
	\item [(c)] there exists $C>0$ such that for some (or any) $r\in (0,\delta)$,
	$$\int_{\C}h_{\eta}\left(C\widehat{\mu}_{w,r}(z)\right)dA(z)<\infty.$$
\end{enumerate}
\end{corollary}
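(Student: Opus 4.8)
The plan is to read off the corollary by pairing Theorem~\ref{schatten-h} with the abstract comparison between $\s_h$-membership and singular value decay recorded in \cite[Lemma 6.1]{EMMN}; no new analysis on the Fock space side is required, so the argument is essentially a matter of checking that the hypotheses of the two cited results line up.

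First I would verify that $h_\eta$ meets the standing assumptions of Theorem~\ref{schatten-h}, namely that it is a continuous increasing convex function on $\R^+$ with $h_\eta(0)=0$. Since $\eta$ is continuous and decreasing on $[1,+\infty)$ with $\eta(+\infty)=0$, it maps $[1,+\infty)$ onto $(0,\eta(1)]$ and admits a continuous decreasing inverse there; the defining relation $h_\eta(\eta(t))=1/t$ then reads $h_\eta(s)=1/\eta^{-1}(s)$ for $s\in(0,\eta(1)]$, which is increasing in $s$ and satisfies $h_\eta(s)\to0$ as $s\to0^+$, so that setting $h_\eta(0)=0$ makes it continuous at the origin. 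Convexity is assumed in the statement. Only the behaviour of $h_\eta$ near $0$ is relevant, since $T_{\mu}$ is bounded and hence $\widetilde{\mu}$ and $\widehat{\mu}_{w,r}$ are bounded by Theorem~\ref{bdd}; one may extend $h_\eta$ convexly beyond $\eta(1)$ if a definition on all of $\R^+$ is wanted.

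With these verifications in hand, applying Theorem~\ref{schatten-h} with $h=h_\eta$ gives the equivalence of $T_{\mu}\in\s_{h_\eta}(F^2_{\alpha,w})$ with conditions (b) and (c), where by definition $T_{\mu}\in\s_{h_\eta}(F^2_{\alpha,w})$ means $\sum_{n\geq1}h_\eta(Cs_n(T_{\mu}))<\infty$ for some $C>0$. It therefore remains only to identify this membership with the decay estimate (a). This is precisely the content of \cite[Lemma 6.1]{EMMN}: for a non-increasing null sequence $\{s_n\}$, the regularity assumption $\eta(t\log t)\asymp\eta(t)$ guarantees that $\sum_{n\geq1}h_\eta(Cs_n)<\infty$ for some $C>0$ holds if and only if $s_n\lesssim\eta(n)$. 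Taking $s_n=s_n(T_{\mu})$, which is legitimate because $T_{\mu}$ is assumed compact and so has a non-increasing null singular value sequence, yields $T_{\mu}\in\s_{h_\eta}(F^2_{\alpha,w})\Longleftrightarrow s_n(T_{\mu})\lesssim\eta(n)$. Chaining the two equivalences gives (a)$\Longleftrightarrow$(b)$\Longleftrightarrow$(c).

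The step requiring the most care is the interface between the two cited results rather than any single estimate: one must confirm that the $\s_h$-class in Theorem~\ref{schatten-h} (defined via $\sum_{n\geq1}h(Cs_n)<\infty$ for some $C>0$) is literally the same notion used in \cite[Lemma 6.1]{EMMN}, and that $h_\eta$ as built from $\eta$ simultaneously satisfies the convexity, monotonicity and normalization demanded by Theorem~\ref{schatten-h} and the hypotheses under which Lemma~6.1 is stated. Once the two vocabularies are matched, the corollary is immediate.
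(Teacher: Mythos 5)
Your proposal is correct and follows exactly the paper's route: the paper derives this corollary directly from Theorem~\ref{schatten-h} combined with \cite[Lemma 6.1]{EMMN}, which is precisely the chain of equivalences you describe. Your additional verification that $h_\eta$ meets the hypotheses of Theorem~\ref{schatten-h} is a reasonable filling-in of details the paper leaves implicit.
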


\begin{example}
Let $\alpha>0$, $w\in A^{\res}_{\infty}$, and let $\mu$ be a positive Borel measure on $\C$ such that $T_{\mu}$ is compact on $F^2_{\alpha,w}$. Suppose that $s_n(T_{\mu})$ is the $n$th singular value of $T_{\mu}$, and let $\delta\in(0,1)$ be the constant from Theorem \ref{pointwise-below}. Then for any $\gamma>0$, the following conditions are equivalent:
\begin{enumerate}
	\item [(a)] $s_n(T_{\mu})\lesssim(\log n)^{-\gamma}$;
	\item [(b)] for some $C>0$,
	$$\int_{\C}\exp\left(-\left(C\widetilde{\mu}(z)\right)^{-\frac{1}{\gamma}}\right)dA(z)<\infty;$$
	\item [(c)] for some $r\in(0,\delta)$ and $C>0$, $$\int_{\C}\exp\left(-\left(C\widehat{\mu}_{w,r}(z)\right)^{-\frac{1}{\gamma}}\right)dA(z)<\infty.$$
\end{enumerate}
In fact, let
$$\eta(t)=\frac{1}{(1+\gamma+\log t)^{\gamma}},\quad t\in[1,+\infty)$$
and
\begin{equation*}
h_{\eta}(t)=
\begin{cases}
	0,&   \text{if $t=0$},\\
	\exp\left(1+\gamma-t^{-\frac{1}{\gamma}}\right),&   \text{if $0<t\leq(1+\gamma)^{-\gamma}$},\\
	\frac{1}{\gamma}(1+\gamma)^{1+\gamma}t-\frac{1}{\gamma},& \text{if $t>(1+\gamma)^{-\gamma}$}.
\end{cases}
\end{equation*}
Then $h_{\eta}$ is convex, $\eta(t\log t)\asymp\eta(t)$ as $t\to+\infty$, and for any $t\in[1,+\infty)$, $h_{\eta}(\eta(t))=1/t$. Hence by Corollary \ref{decay}, $s_n(T_{\mu})\lesssim (\log n)^{-\gamma}$ if and only if for some $C>0$,
$$\int_{\C}h_{\eta}\left(C\widetilde{\mu}(z)\right)dA(z)<\infty.$$
Since the compactness of $T_{\mu}$ implies that $\widetilde{\mu}$ is bounded on $\C$ and vanishes at infinity, we may choose $R>0$ such that $C\widetilde{\mu}(z)<(1+\gamma)^{-\gamma}$ whenever $|z|\geq R$. Noting that
$$\int_{D(0,R)}h_{\eta}\left(C\widetilde{\mu}(z)\right)dA(z)<\infty$$
and
$$\int_{D(0,R)}\exp\left(-(C\widetilde{\mu}(z))^{-\frac{1}{\gamma}}\right)dA(z)<\infty,$$
we obtain that (a) and (b) are equivalent. The equivalence of (a) and (c) is similar.
\end{example}

\section{Applications}\label{app}

In this section, we give some applications of the main results to Volterra operators and weighted composition operators.

\subsection{Volterra operators}

Given an entire function $g$ on $\C$, the Volterra operator $J_g$ is defined for entire functions $f$ by
$$J_gf(z):=\int_0^zf(\xi)g'(\xi)d\xi,\quad z\in\C.$$
It follows from \cite[Theorems 3.1 and 3.2]{Xu} that, for $\alpha,p>0$ and $w\in A^{\res}_{\infty}$, $J_g$ is bounded (resp. compact) on $F^p_{\alpha,w}$ if and only if $g$ is a polynomial of degree not more than $2$ (resp. not more than $1$). We here apply Theorem \ref{schatten-p} to characterize the Schatten $p$-class Volterra operators on $F^2_{\alpha,w}$. To this end, define the integral pairing $\langle \cdot,\cdot\rangle_{*}$ as follows:
$$\langle f,g\rangle_{*}:=f(0)\overline{g(0)}+\int_{\C}f'(z)\overline{g'(z)}e^{-\alpha|z|^2}\frac{\widehat{w}_1(z)}{(1+|z|)^2}dA(z).$$

\begin{corollary}
	Let $\alpha>0$, $w\in A^{\res}_{\infty}$, and let $g(z)=az+b$ for some $a,b\in\C$. Then $J_g$ belongs to $\s_p(F^2_{\alpha,w})$ for all $p>2$, but it fails to be Hilbert--Schmidt unless $a=0$.
\end{corollary}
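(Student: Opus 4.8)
The plan is to reduce the problem to the Schatten theory of Toeplitz operators established in Theorem \ref{schatten-p}, by identifying $J_g^*J_g$, up to a uniformly bounded and boundedly invertible factor, with a Toeplitz operator $T_{\mu}$ whose symbol is explicit. Since $g(z)=az+b$ gives $g'\equiv a$, we may assume $a\neq0$, for otherwise $J_g=0$ lies in every Schatten class. The starting point, and the main technical ingredient, is the norm equivalence
$$\langle f,f\rangle_{*}\asymp\|f\|^2_{F^2_{\alpha,w}},\qquad f\in F^2_{\alpha,w};$$
that is, $\langle\cdot,\cdot\rangle_{*}$ is an inner product on $F^2_{\alpha,w}$ equivalent to the original one. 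Using Lemma \ref{hat-norm} to replace $w$ by $\widehat w_1$, this amounts to the Littlewood--Paley type estimate $|f(0)|^2+\int_{\C}|f'|^2e^{-\alpha|z|^2}\widehat w_1(z)(1+|z|)^{-2}\,dA\asymp\int_{\C}|f|^2e^{-\alpha|z|^2}\widehat w_1(z)\,dA$, which I would obtain from the pointwise bound in Lemma \ref{pointwise} together with the local regularity \eqref{sd} of $\widehat w_1$ (or invoke the corresponding estimate underlying the boundedness theory of \cite{Xu}). This derivative-norm equivalence is where the factor $(1+|z|)^{-2}$ enters, and I expect it to be the hardest step.

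Granting the equivalence, let $G$ be the positive, bounded, boundedly invertible operator on $F^2_{\alpha,w}$ with $\langle u,v\rangle_{*}=\langle Gu,v\rangle_{F^2_{\alpha,w}}$, so that $c_1I\le G\le c_2I$ for some $c_1,c_2>0$. Since $(J_gf)(0)=0$ and $(J_gf)'=af$, a direct computation gives, for polynomials $f,h$,
$$\langle J_gf,J_gh\rangle_{*}=|a|^2\int_{\C}f\overline h\,e^{-\alpha|z|^2}\frac{\widehat w_1(z)}{(1+|z|)^2}\,dA(z)=\langle f,h\rangle_{L^2_\alpha(\mu)},$$
where $d\mu(z):=|a|^2\widehat w_1(z)(1+|z|)^{-2}\,dA(z)$. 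By \eqref{repro} (the measure $\mu$ being admissible, as checked below) the right-hand side equals $\langle T_{\mu}f,h\rangle_{F^2_{\alpha,w}}$, and extending by density I obtain $J_g^*GJ_g=T_{\mu}$. Consequently $c_1J_g^*J_g\le T_{\mu}\le c_2J_g^*J_g$, whence by the min--max principle $s_n(T_{\mu})\asymp s_n(J_g^*J_g)=s_n(J_g)^2$ for every $n$.

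The remaining step is to evaluate $\mu$ through its averages. Fixing $r\in(0,\delta)$ and using \eqref{sd} (so that $\widehat w_1(\xi)\asymp w(D(z,1))$ and $1+|\xi|\asymp 1+|z|$ on $D(z,r)$, and $w(D(z,r))\asymp w(D(z,1))$), I find
$$\widehat\mu_{w,r}(z)=\frac{\mu(D(z,r))}{w(D(z,r))}\asymp\frac{|a|^2}{(1+|z|)^2},\qquad z\in\C.$$
In particular $\widehat\mu_{w,r}$ is bounded and vanishes at infinity, so $T_{\mu}$ is bounded and compact by Theorem \ref{bdd} and Corollary \ref{cpt}, justifying the use of \eqref{repro} above and ensuring $J_g$ is compact. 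Since $s_n(J_g)^p\asymp s_n(T_{\mu})^{p/2}$, Theorem \ref{schatten-p} applied to $T_{\mu}$ at the exponent $p/2$ yields
$$J_g\in\s_p(F^2_{\alpha,w})\iff T_{\mu}\in\s_{p/2}(F^2_{\alpha,w})\iff\widehat\mu_{w,r}\in L^{p/2}(\C,dA),$$
and the last condition is equivalent to $\int_{\C}(1+|z|)^{-p}\,dA(z)<\infty$, which holds precisely when $p>2$. This gives $J_g\in\s_p$ for every $p>2$; for $p=2$ the integral $\int_{\C}(1+|z|)^{-2}\,dA$ diverges, so $T_{\mu}\notin\s_1$ and hence $J_g$ fails to be Hilbert--Schmidt whenever $a\neq0$.
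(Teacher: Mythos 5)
Your proposal is correct and follows essentially the same route as the paper: identify $J_g^*J_g$ (up to the equivalence of the two inner products) with $T_{\mu}$ for $d\mu=|a|^2\widehat w_1(z)(1+|z|)^{-2}dA(z)$, compute $\widehat\mu_{w,r}(z)\asymp|a|^2(1+|z|)^{-2}$ via \eqref{sd}, and apply Theorem \ref{schatten-p} at exponent $p/2$; your explicit handling of the operator $G$ is in fact a bit more careful than the paper's shorthand $J_g^*J_g=T_{\mu_g}$. The only caveat is the Littlewood--Paley equivalence $\langle f,f\rangle_*\asymp\|f\|^2_{F^2_{\alpha,w}}$, which is not really a consequence of Lemma \ref{pointwise} and \eqref{sd} alone; the paper simply cites \cite[Theorem 1.1]{CFP} together with Lemma \ref{hat-norm} for it, and you should do the same rather than sketch a re-derivation.
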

\begin{proof}
	By Lemma \ref{hat-norm} and \cite[Theorem 1.1]{CFP}, the pairing $\langle \cdot,\cdot\rangle_{*}$ is an inner product on $F^2_{\alpha,w}$ that induces an equivalent norm. For any $f,h\in F^2_{\alpha,w}$, it is clear that
	$$\langle J_g^*J_gf,h\rangle_{*}=\int_{\C}f(z)\overline{h(z)}|g'(z)|^2e^{-\alpha|z|^2}\frac{\widehat{w}_1(z)}{(1+|z|)^2}dA(z)
	=\langle f,h\rangle_{L^2_{\alpha}(\mu_g)},$$
	where the measure $\mu_g$ is defined by
	$$d\mu_g(z):=|g'(z)|^2\frac{\widehat{w}_1(z)}{(1+|z|)^2}dA(z).$$
	Consequently, by \eqref{repro}, $J_g^*J_g=T_{\mu_g}$, which implies that $J_g\in\s_p(F^2_{\alpha,w})$ if and only if $T_{\mu_g}\in\s_{p/2}(F^2_{\alpha,w})$. This together with Theorem \ref{schatten-p} gives that for $p\geq2$, $J_g\in\s_p(F^2_{\alpha,w})$ if and only if $\widehat{(\mu_g)}_{w,r}\in L^{p/2}(\C,dA)$. Note that for any $z\in\C$, \eqref{sd} yields 
	$$\widehat{(\mu_g)}_{w,r}(z)=\frac{1}{w(D(z,r))}\int_{D(z,r)}|g'(\xi)|^2\frac{w(D(\xi,1))}{(1+|\xi|)^2}dA(\xi)
	\asymp\frac{|a|^2}{(1+|z|)^2}.$$
	The desired result follows easily.
\end{proof}

\subsection{Weighted composition operators}

Given two entire functions $\varphi,\psi$ on $\C$, the weighted composition operators $W_{\varphi,\psi}$ is defined by
$$W_{\varphi,\psi}f:=\psi\cdot f\circ\varphi.$$
Let $\mu_{\varphi,\psi}$ denote the positive pull-back measure on $\C$ defined by
$$\mu_{\varphi,\psi}(E):=\int_{\varphi^{-1}(E)}|\psi(z)|^2e^{-\alpha(|z|^2-|\varphi(z)|^2)}w(z)dA(z)$$
for every Borel subset $E$ of $\C$. Then for any $f,g\in F^2_{\alpha,w}$,
\begin{align*}
\langle W_{\varphi,\psi}^*W_{\varphi,\psi}f,g\rangle_{F^2_{\alpha,w}}
&=\int_{\C}f(\varphi(z))\overline{g(\varphi(z))}|\psi(z)|^2e^{-\alpha|z|^2}w(z)dA(z)\\
&=\int_{\C}f(z)\overline{g(z)}e^{-\alpha|z|^2}d\mu_{\varphi,\psi}(z).
\end{align*}
Therefore,  $W^*_{\varphi,\psi}W_{\varphi,\psi}=T_{\mu_{\varphi,\psi}}$ and the following result is a direct consequence of Theorems \ref{bdd}, \ref{schatten-p} and Corollary \ref{cpt}. 

\begin{corollary}
Let $\alpha>0$, $w\in A^{\res}_{\infty}$, and let $\varphi,\psi$ be entire functions on $\C$. Then there exists $\delta\in(0,1)$ such that
\begin{enumerate}
	\item [(1)] $W_{\varphi,\psi}$ is bounded on $F^2_{\alpha,w}$ if and only if for some (or any) $r\in(0,\delta)$,
	$$\sup_{z\in\C}\frac{1}{w(D(z,r))}\int_{\varphi^{-1}(D(z,r))}|\psi(\xi)|^2e^{-\alpha(|\xi|^2-|\varphi(\xi)|^2)}w(\xi)dA(\xi)<\infty;$$
	\item [(2)] $W_{\varphi,\psi}$ is compact on $F^2_{\alpha,w}$ if and only if for some (or any) $r\in(0,\delta)$,
	$$\lim_{|z|\to\infty}\frac{1}{w(D(z,r))}\int_{\varphi^{-1}(D(z,r))}|\psi(\xi)|^2e^{-\alpha(|\xi|^2-|\varphi(\xi)|^2)}w(\xi)dA(\xi)=0;$$
	\item [(3)] for $p\geq2$, $W_{\varphi,\psi}\in \s_p(F^2_{\alpha,w})$ if and only if for some (or any) $r\in(0,\delta)$, the function
	$$z\mapsto\frac{1}{w(D(z,r))}\int_{\varphi^{-1}(D(z,r))}|\psi(\xi)|^2e^{-\alpha(|\xi|^2-|\varphi(\xi)|^2)}w(\xi)dA(\xi)$$
	belongs to $L^{p/2}(\C,dA)$.
\end{enumerate}
\end{corollary}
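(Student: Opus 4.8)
The plan is to deduce all three statements from the operator identity $W_{\varphi,\psi}^{*}W_{\varphi,\psi}=T_{\mu_{\varphi,\psi}}$ established just above, by transferring spectral information from the positive, self-adjoint Toeplitz operator $T_{\mu_{\varphi,\psi}}$ back to $W_{\varphi,\psi}$. The first thing to observe is that the function displayed in each of (1)--(3) is exactly the average function of the pull-back measure: directly from the definition of $\mu_{\varphi,\psi}$,
$$\widehat{(\mu_{\varphi,\psi})}_{w,r}(z)=\frac{\mu_{\varphi,\psi}(D(z,r))}{w(D(z,r))}
=\frac{1}{w(D(z,r))}\int_{\varphi^{-1}(D(z,r))}|\psi(\xi)|^{2}e^{-\alpha(|\xi|^{2}-|\varphi(\xi)|^{2})}w(\xi)dA(\xi).$$
Thus, taking $\delta\in(0,1)$ to be the common constant from Theorem \ref{pointwise-below}, parts (1), (2) and (3) will follow once boundedness, compactness and $\s_p$-membership of $W_{\varphi,\psi}$ are matched to the corresponding properties of $T_{\mu_{\varphi,\psi}}$, to which Theorem \ref{bdd}, Corollary \ref{cpt} and Theorem \ref{schatten-p} then apply verbatim.

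For (1) I would use that a densely defined operator is bounded precisely when $W_{\varphi,\psi}^{*}W_{\varphi,\psi}$ is, with $\|W_{\varphi,\psi}\|^{2}=\|T_{\mu_{\varphi,\psi}}\|$. The one delicate point is that the identity $W_{\varphi,\psi}^{*}W_{\varphi,\psi}=T_{\mu_{\varphi,\psi}}$ presupposes boundedness of $W_{\varphi,\psi}$, so for the ``if'' direction I would instead start from the change-of-variables identity $\|W_{\varphi,\psi}f\|_{F^2_{\alpha,w}}^{2}=\|f\|_{L^2_{\alpha}(\mu_{\varphi,\psi})}^{2}$, which holds for every polynomial $f$ with no a priori boundedness assumption. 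If the average function is bounded, then Theorem \ref{bdd} makes $T_{\mu_{\varphi,\psi}}$ bounded, whence Lemma \ref{well-defined} gives $\|W_{\varphi,\psi}f\|_{F^2_{\alpha,w}}^{2}=\langle T_{\mu_{\varphi,\psi}}f,f\rangle_{F^2_{\alpha,w}}\leq\|T_{\mu_{\varphi,\psi}}\|\,\|f\|_{F^2_{\alpha,w}}^{2}$ on polynomials; density of polynomials (Lemma \ref{dense}) then extends $W_{\varphi,\psi}$ to a bounded operator, and the converse is immediate from the identity and Theorem \ref{bdd}. For (2) the same equivalence is applied at the level of compactness: via the polar decomposition $W_{\varphi,\psi}=U|W_{\varphi,\psi}|$ together with $|W_{\varphi,\psi}|=T_{\mu_{\varphi,\psi}}^{1/2}$, the operator $W_{\varphi,\psi}$ is compact if and only if $T_{\mu_{\varphi,\psi}}$ is, and Corollary \ref{cpt} supplies the stated vanishing condition (note that this condition forces the average function to be bounded, so $W_{\varphi,\psi}$ is bounded by (1) and the identity is legitimate).

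For (3) I would exploit positivity: since $T_{\mu_{\varphi,\psi}}\geq0$, its singular values coincide with its eigenvalues, and the polar decomposition yields $s_n(W_{\varphi,\psi})^{2}=s_n(W_{\varphi,\psi}^{*}W_{\varphi,\psi})=s_n(T_{\mu_{\varphi,\psi}})$ for every $n$. Hence $\sum_n s_n(W_{\varphi,\psi})^{p}=\sum_n s_n(T_{\mu_{\varphi,\psi}})^{p/2}$, so $W_{\varphi,\psi}\in\s_p(F^2_{\alpha,w})$ if and only if $T_{\mu_{\varphi,\psi}}\in\s_{p/2}(F^2_{\alpha,w})$. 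The restriction $p\geq2$ enters precisely here, guaranteeing $p/2\geq1$ so that Theorem \ref{schatten-p} is applicable and gives $T_{\mu_{\varphi,\psi}}\in\s_{p/2}(F^2_{\alpha,w})$ if and only if $\widehat{(\mu_{\varphi,\psi})}_{w,r}\in L^{p/2}(\C,dA)$. I expect the only genuinely non-formal point in the whole argument to be the a priori boundedness issue in the ``if'' direction of (1); everything else is a transcription of standard $T^{*}T$-relations through the three main theorems, exactly mirroring the treatment of $J_g^{*}J_g=T_{\mu_g}$ in the preceding Volterra corollary.
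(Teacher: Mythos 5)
Your proposal is correct and follows essentially the same route as the paper, which simply notes the identity $W_{\varphi,\psi}^*W_{\varphi,\psi}=T_{\mu_{\varphi,\psi}}$, observes that the displayed function is $\widehat{(\mu_{\varphi,\psi})}_{w,r}$, and invokes Theorems \ref{bdd}, \ref{schatten-p} and Corollary \ref{cpt}. In fact you are somewhat more careful than the paper, which does not address the a priori boundedness issue in the ``if'' direction of (1); your use of the change-of-variables identity on polynomials together with Lemma \ref{well-defined} and density fills that gap cleanly.
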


\medskip





\end{document}